\newtheorem{lemma}{Lemma}[section]
\newtheorem{theorem}[lemma]{Theorem}
\newtheorem*{definition}{Definition}
\newtheorem*{remark}{Remark}
\renewcommand{\epsilon}{\varepsilon}
\begin{document}
\title[Orbit Portraits]{Orbit Portraits of Unicritical Anti-polynomials}

\author[S. Mukherjee]{Sabyasachi Mukherjee}

\address{Jacobs University Bremen, Campus Ring 1, Bremen 28759, Germany}

\email{s.mukherjee@jacobs-university.de}

\subjclass[2010]{37E15, 37E10, 37F10, 37F20}

\date{\today}

\begin{abstract}
Orbit portraits were introduced by Milnor as a combinatorial tool to describe the patterns of all periodic dynamical rays landing on a periodic cycle of a quadratic polynomial. This encodes information about the dynamics and the parameter spaces of these maps. We carry out a similar analysis for unicritical anti-polynomials and give an explicit description of the orbit portraits that can occur for such maps in terms of their characteristic angles, which turns out to be rather restricted when compared with the holomorphic case. Finally, we prove a realization theorem for these combinatorial objects. The results obtained in this paper serve as a combinatorial foundation for a detailed understanding of the combinatorics and topology of the parameter spaces of unicritical anti-polynomials and their connectedness loci, known as the multicorns.
\end{abstract}

\maketitle

\tableofcontents

\section{Introduction}

In this article, we study some combinatorial properties of the iteration of unicritical anti-polynomials $f_c(z) = \bar{z}^d + c,$ for any degree $d \geq 2 $ and $c \in \mathbb{C}$. As in polynomial iteration theory, the dynamics of an anti-polynomial on the plane induces a simpler dynamical system on $\mathbb{R}/\mathbb{Z}$  via its action on the dynamical rays. This is a combinatorial object that one can study with greater ease and recover many properties of the original dynamical system. One of the key tools in this approach is the notion of orbit portraits, which was first introduced by John Milnor in his paper \cite{M2a} to describe the pattern of all periodic dynamical rays landing at different points of a periodic cycle for quadratic complex polynomials. The usefulness of orbit portraits stems from the fact that these combinatorial objects contain substantial information on the connection between the dynamical and the parameter planes of the maps under consideration.

We define the set of all points which remain bounded under all iterations of an anti-polynomial $f_c$ to be the \emph{filled-in Julia set} $K(f_c)$. The boundary of the filled-in Julia set is defined to be the \emph{Julia set} $J(f_c)$ and the complement of the Julia set is defined to be its \emph{Fatou set} $F(f_c)$. This leads, as in the holomorphic case, to the notion of the \emph{Connectedness Locus} of degree $d$ unicritical anti-polynomials:

\begin{definition}
The\emph{ multicorn} of degree $d$ is defined as $\mathcal{M}^{\ast}_d = \lbrace c \in \mathbb{C} : K(f_c)$ is connected$\rbrace$.
\end{definition} 

The dynamics of anti-quadratic maps and their connectedness locus, the tricorn, was first studied in \cite{CHRS} and their numerical experiments showed differences between the Mandelbrot set and the tricorn in that there are bifurcations from the period $1$ hyperbolic component to the period $2$ hyperbolic components along arcs in the tricorn, but not in the Mandelbrot set. Milnor found small tricorn-like sets in the parameter space of real cubic polynomials \cite{M3a}. Later, Nakane and Schleicher, in \cite{NS}, studied the structure of hyperbolic components  of $\mathcal{M}_d^*$ via the multiplier map (even period case) and the critical value map (odd period case). These maps are branched coverings over the unit disk of degree $d-1$ and $d+1$ respectively, branched only over the origin. Many years down the line, Hubbard and Schleicher \cite{HS} proved that the multicorns are not path connected, confirming a conjecture of Milnor.


The combinatorics and topology of the multicorns differ in many ways from those of their holomorphic counterparts, the multibrot sets, which are the connectedness loci of degree $d$ unicritical polynomials \cite{NS, HS}. At the level of combinatorics, this manifests itself in the structure of orbit portraits (Theorem \ref{complete antiholomorphic}). These combinatorial results are extensively used in a forthcoming paper \cite{MNS}, where we study the bifurcation phenomena, the structure of boundaries of odd period hyperbolic components, the combinatorics of parameter rays, the discontinuity of landing points of dynamical rays, the number of hyperbolic components of a given period to name a few. Since we lose holomorphic dependence in the parameter spaces of anti-polynomials, one needs to carry out many proofs using more combinatorial methods. One of the key ingredients of the proofs in \cite{MNS} is the orbit portraits, and these are the main subject of the present manuscript. Another place where the combinatorial results proved in this paper are utilized is \cite{IM}, where we prove that certain parameter rays of the multicorns do not land; rather they accumulate on an arc of positive length in the parameter plane.

The following is an overview of the results proved in this paper. After giving some necessary background, we define orbit portraits for unicritical anti-polynomials and note their basic properties. This is followed by examples of orbit portraits of various types. The classification Theorem \ref{complete antiholomorphic} asserts that these are the only types of orbit portraits that can occur in our setting. The proof of this theorem involves a careful study of the characteristic angles of an orbit portrait and this is carried out in Section \ref{classification}. For even-periodic cycles, the first return map is holomorphic and the situation is completely similar to that of holomorphic unicritical polynomials. However, when the period of a cycle is odd, the first return map is orientation-reversing and the combinatorics of the orbit portraits associated with them turn out to be quite restricted thanks to Lemma \ref{three}, which states that at most $3$ periodic dynamical rays can land at a periodic point of odd period of a unicritical anti-polynomial. In Section \ref{formal}, we define \emph{formal orbit portraits} as a finite collection of finite subsets of $\mathbb{Q}/\mathbb{Z}$ satisfying some properties, and we prove a realization theorem in the sense that, for every formal orbit portrait, there exists a unicritical anti-polynomial (outside the multicorns) with a periodic cycle admitting the given formal orbit portrait.

\dedicatory{The author would like to thank Dierk Schleicher for his useful advice and suggestions for the improvement of the text. Thanks also go to Brennan Bell for reading the original manuscript carefully. This work was supported by a grant from the Deutsche Forschungsgemeinschaft DFG, which we gratefully acknowledge.}
\section{Orbit Portraits}
In this section, we define orbit portraits for unicritical anti-polynomials and prove some of their basic properties.  We first recall some background results regarding the dynamical rays and their dynamics.

Let $f_c = \bar{z}^d + c$ be a unicritical anti-polynomial of degree d (any unicritical anti-polynomial of degree d can be affinely conjugated to an anti-polynomial of the above form). By \cite[Lemma 1]{Na1}, there is a B\"{o}ttcher map near $\infty$ that conjugates $f_c$ to $\bar{z}^d$. Using this, one can define the dynamical rays of $f_c$ as pre-images of the radial lines. The dynamical ray $\mathcal{R}_t$ at angle $t\in \mathbb{R}/\mathbb{Z}$ maps to the dynamical ray $\mathcal{R}_{-dt}$ at angle $-dt$ under $f_c$. We refer the readers to \cite[Section 3]{NS} for more details. 

We measure angles in the fraction of a whole turn, i.e., our angles are elements of $\mathbb{S}^1 \cong \mathbb{R}/\mathbb{Z}$. We define for two different angles $\theta_1,\theta_2\in \mathbb{R}/\mathbb{Z}$, the interval $\left(\theta_1,\theta_2\right)\subset \mathbb{R}/\mathbb{Z}$ as the open connected component of $\mathbb{R}/\mathbb{Z} \setminus \lbrace \theta_1,\theta_2 \rbrace$ that consists of the angles we traverse if we move on $\mathbb{R}/\mathbb{Z}$ in counter-clockwise direction from $\theta_1$ to $\theta_2$.  Finally, we denote the length of an interval $I_1 \subset \mathbb{R}/\mathbb{Z}$ by $\ell(I_1)$ such that $\ell(\mathbb{S}_1)=1$.

\begin{definition} Let $\mathcal{O} = \lbrace z_1 , z_2 ,\cdots,z_p\rbrace$ be a periodic cycle of a unicritical anti-polynomial $f$. If a dynamical ray $\mathcal{R}_t^f$ at a rational angle $t \in \mathbb{Q}/\mathbb{Z}$ lands at some $z_i$; then for all j, the set $\mathcal{A}_j$ of the angles of all the dynamical rays landing at $z_j$ is a non-empty finite subset of $\mathbb{Q}/\mathbb{Z}$. The collection $\lbrace \mathcal{A}_1, \mathcal{A}_2, \cdots, \mathcal{A}_p \rbrace$ will be called the \emph{Orbit Portrait} $\mathcal{P(O)}$ of the orbit $\mathcal{O}$ corresponding to the anti-polynomial $f$.
\end{definition}

An orbit portrait $\mathcal{P(O)}$ will be called trivial if only one ray lands at each point of $\mathcal{O}$; i.e. $\vert \mathcal{A}_j \vert = 1, \hspace{1mm} \forall j$. From now on, we will denote an orbit portrait simply by $\mathcal{P}$; the associated orbit will be clear from the context.

\begin{lemma}[Unlinking Property]
For any orbit portrait $\displaystyle$ $\mathcal{P}$ associated with a periodic orbit of a unicritical anti-polynomial, the sets $\displaystyle \mathcal{A}_1 , \mathcal{A}_2, \cdots, \mathcal{A}_p$ are \emph{pairwise unlinked}, that is, for each $i \neq j$ the sets $\mathcal{A}_i$ and $\mathcal{A}_j$ are contained in disjoint sub-intervals of $\mathbb{R}/\mathbb{Z}$.
\end{lemma}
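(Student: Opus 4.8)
The plan is to prove the unlinking property by a topological argument based on the fact that distinct landing points of the cycle are separated by the dynamical rays themselves. The key observation is that the rays landing at a periodic point, together with that landing point, form a connected set in the plane, and rays landing at \emph{different} points of the cycle cannot cross each other, since dynamical rays are pairwise disjoint and each lands at a well-defined point.

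First I would fix two distinct points $z_i \neq z_j$ of the cycle $\mathcal{O}$ and consider the finite collections of rays landing at them, with angle sets $\mathcal{A}_i$ and $\mathcal{A}_j$ respectively. For each point, form the ``ray-bouquet'' $\Gamma_i = \{z_i\} \cup \bigcup_{t \in \mathcal{A}_i} \mathcal{R}_t^f$, and similarly $\Gamma_j$. The central claim is that $\Gamma_i$ and $\Gamma_j$ are disjoint: two rays at distinct angles are disjoint by construction (they are preimages of distinct radial lines under the B\"{o}ttcher conjugacy near $\infty$, extended by the dynamics), and since $z_i \neq z_j$, no ray can land at both. Thus $\Gamma_i$ and $\Gamma_j$ are two disjoint connected sets, each containing its landing point and accumulating at the corresponding collection of angles on the circle at infinity.

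Next I would translate this planar disjointness into the combinatorial unlinking statement. Each bouquet $\Gamma_i$, being connected and reaching out to infinity along the rays with angles in $\mathcal{A}_i$, together with the circle at infinity, cuts the plane (or rather the closed disk model compactifying $\mathbb{C}$ by the circle $\mathbb{R}/\mathbb{Z}$ at infinity) into complementary regions. The angles of $\mathcal{A}_j$ all lie in the boundary-at-infinity of a \emph{single} complementary component of $\Gamma_i$, because $\Gamma_j$ is connected, disjoint from $\Gamma_i$, and hence cannot meet more than one such component. This single component subtends one of the open arcs of $\mathbb{R}/\mathbb{Z} \setminus \mathcal{A}_i$; consequently all angles in $\mathcal{A}_j$ lie in one arc complementary to $\mathcal{A}_i$, and by symmetry all angles in $\mathcal{A}_i$ lie in one arc complementary to $\mathcal{A}_j$. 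This is precisely the assertion that $\mathcal{A}_i$ and $\mathcal{A}_j$ are contained in disjoint subintervals of $\mathbb{R}/\mathbb{Z}$, i.e.\ that they are unlinked.

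The main obstacle I anticipate is making the ``circle at infinity'' / Jordan-curve reasoning rigorous rather than merely intuitive: one must justify that the rays genuinely land (so that $\Gamma_i$ is well-defined and connected up to its landing point), that the compactified picture in which $\mathbb{C}$ is capped off by $\mathbb{R}/\mathbb{Z}$ behaves as expected, and that a connected set disjoint from a separating ``theta-graph'' lies in a single complementary face. Since the landing of the relevant rational rays at periodic points is already part of the setup (it is exactly the hypothesis under which $\mathcal{A}_i$ is defined), and since dynamical rays are smooth disjoint arcs, I expect these topological facts to follow from a careful but standard application of the Jordan curve theorem, exactly as in Milnor's treatment of the holomorphic case. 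The anti-holomorphicity of $f_c$ plays no role here, since unlinking is a statement about the static planar configuration of rays and their landing points, not about the dynamics.
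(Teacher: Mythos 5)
Your argument is correct and is precisely the standard expansion of the paper's one-line proof, which simply invokes the fact that two rays cannot cross each other; the ray-bouquet and separation reasoning you spell out is the same underlying idea, made explicit as in Milnor's treatment. No substantive difference in approach.
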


\begin{proof}
This follows from the fact that two rays cannot cross each other.
\end{proof}

\begin{lemma}[Orientation Reversal] For any anti-polynomial $f$, if the dynamical ray $\mathcal{R}_t$ at angle $t$ lands at a point $z \in J(f)$, then the image ray $f \left(\mathcal{R}_t\right) = \mathcal{R}_{-dt}$ lands at the point $f(z)$. Furthermore, if three or more dynamical rays land at $z$, then the cyclic order of their angles around $\mathbb{R}/\mathbb{Z}$ is reversed by the action of  $f$ ; i.e. if the rays $\lbrace\mathcal{R}_{t_1}, \mathcal{R}_{t_2},\cdots,\mathcal{R}_{t_v}\rbrace$ land at $z$, then the cyclic order of $\lbrace -dt_1, -dt_2,\cdots, -dt_v \rbrace$ is the opposite of that of $\lbrace t_1, t_2,\cdots,t_v \rbrace$.
\end{lemma}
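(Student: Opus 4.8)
The plan is to handle the two assertions separately: the landing statement by a bare continuity argument, and the cyclic-order statement by isolating the one structural feature that distinguishes anti-polynomials from polynomials, namely that $f_c$ is orientation-\emph{reversing}.

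For the first claim I would parametrize the ray $\mathcal{R}_t$ by potential, so that it is a continuous arc accumulating at $z$ as the potential decreases to the landing value. Since $f_c$ is a continuous (indeed real-analytic) self-map of $\mathbb{C}$ and the background recollection gives $f_c(\mathcal{R}_t)=\mathcal{R}_{-dt}$, the arc $\mathcal{R}_{-dt}$ is the continuous image of $\mathcal{R}_t$; passing to the limit along the ray and using continuity of $f_c$ shows that $\mathcal{R}_{-dt}$ accumulates at $f_c(z)$, i.e.\ it lands there. This is the standard ``landing points are carried to landing points by a continuous map'' observation and uses nothing beyond continuity.

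For the second claim I first want to record the local geometry of $f_c$ at the landing point. In the orbit-portrait setting $z$ is a point of the periodic cycle carrying $\mathcal{P}$; such cycles are repelling or parabolic and lie in $J(f_c)$, so in particular $z$ is never the (super-)attracting critical point $0$, and $f_c$ is therefore a local diffeomorphism near $z$. Because $f_c(z)=\bar z^{d}+c$ is anti-holomorphic, its real Jacobian determinant at any point $z\neq 0$ equals $-\,|d\,\bar z^{\,d-1}|^{2}<0$; hence $f_c$ is orientation-reversing in a neighbourhood of $z$. With this in hand I would transport cyclic order as follows. The counterclockwise cyclic order in which the germs of $\mathcal{R}_{t_1},\dots,\mathcal{R}_{t_v}$ approach $z$ coincides with the counterclockwise order of the angles $t_1,\dots,t_v$ on $\mathbb{R}/\mathbb{Z}$: the rays are pairwise disjoint and this angular order is the one they inherit from their asymptotic directions near $\infty$ in B\"{o}ttcher coordinates, which disjointness prevents from being disturbed as the rays run in to $z$. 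Applying $f_c$, an orientation-reversing local homeomorphism at $z$, carries these counterclockwise-ordered germs to germs at $f_c(z)$ appearing in the reverse (clockwise) cyclic order; since the image germs belong to $\mathcal{R}_{-dt_1},\dots,\mathcal{R}_{-dt_v}$ and their counterclockwise order around $f_c(z)$ again records the counterclockwise order of the angles $-dt_1,\dots,-dt_v$, it follows that the cyclic order of $\{-dt_1,\dots,-dt_v\}$ is the opposite of that of $\{t_1,\dots,t_v\}$, for $v\geq 3$ (the only case in which cyclic order carries information).

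The hard part will be the careful justification of the two order identifications used above: that the order of the ray germs at the landing point matches the order of angles on the circle, and that an orientation-reversing local homeomorphism genuinely reverses this cyclic order. The delicate point is that the angles $t_i$ may be spread around the whole circle, so one cannot simply appeal to the global circle map $t\mapsto -dt$ (which has degree $-d$, not $-1$, and is not a homeomorphism); the reversal must instead be read off from the \emph{local} orientation of $f_c$ at the single point $z$, which is exactly why pinning down both the negative Jacobian at $z$ and the disjointness of the rays is essential.
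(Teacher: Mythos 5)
Your proposal is correct and follows essentially the same route as the paper: continuity of $f$ carried along the ray for the landing statement, and the fact that $f$ is a local orientation-reversing diffeomorphism at $z$ (negative Jacobian of an anti-holomorphic map away from the critical point) for the reversal of cyclic order. The paper's proof is just a terser version of yours, adding only the remark that since $\mathcal{R}_t$ lands it avoids all pre-critical points, so the image ray is well-defined all the way to the Julia set.
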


\begin{proof}
Since the ray $\mathcal{R}_{t}$ lands at $z$, it must not pass through any pre-critical point of $f$, hence the same is true for the image ray $\mathcal{R}_{-dt}$. Therefore, the image ray is well-defined all the way to the Julia set and continuity of $f$ implies that it lands at $f(z)$.

For the second part, observe that $f$ is a local orientation-reversing diffeomorphism from $z_i$ to $z_{i+1}$. Hence, it reverses the cyclic order of the rays.
\end{proof}

\begin{lemma}[Finitely Many Rays] If a dynamical ray at a rational angle lands at some point of a periodic orbit $\mathcal{O}$ of an anti-polynomial, then only finitely many rays land at each point of $\mathcal{O}$ and all these rays are periodic.
\end{lemma}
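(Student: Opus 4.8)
The plan is to prove two things: first, that only finitely many rays land at each point of the orbit $\mathcal{O}$, and second, that every such ray is periodic. The natural strategy is to reduce everything to the dynamics of the angle-doubling-type map $t \mapsto -dt$ on $\mathbb{R}/\mathbb{Z}$, using the Orientation Reversal lemma already established. Let me set up the framework: suppose $\mathcal{O} = \{z_1, \dots, z_p\}$ is the periodic cycle, with period $p$, and suppose that $\mathcal{R}_t$ at some rational angle $t$ lands at, say, $z_1$.

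First I would show periodicity. Since $t \in \mathbb{Q}/\mathbb{Z}$ is rational, its orbit under the map $\sigma(t) = -dt$ is finite (multiplication by $-d$ permutes the finitely many rationals with a fixed denominator). Hence $t$ is \emph{preperiodic} under $\sigma$. The key point is to upgrade preperiodicity to genuine periodicity using the fact that $\mathcal{R}_t$ lands on a \emph{periodic} cycle. By the Orientation Reversal lemma, the image ray $f(\mathcal{R}_t) = \mathcal{R}_{\sigma(t)}$ lands at $f(z_1) = z_2$, and iterating, $\mathcal{R}_{\sigma^k(t)}$ lands at $z_{k+1 \bmod p}$. After $p$ iterations, $\mathcal{R}_{\sigma^p(t)}$ lands back at $z_1$. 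Now the essential mechanism is that $f^p$ fixes $z_1$ and acts on a neighborhood of $z_1$; the rays landing at $z_1$ are permuted among themselves by $f^p$, i.e. by $\sigma^p$. Since $\sigma^p$ maps the (finite, by the next step) set of angles landing at $z_1$ bijectively to itself, every angle in that set is periodic under $\sigma^p$, hence under $\sigma$. I should be a little careful here that $f^p$ genuinely permutes the set of rays landing at $z_1$ rather than merely mapping it into a larger set; this is guaranteed because $f^p$ maps the point $z_1$ to itself and is a local homeomorphism near $z_1$ (away from the critical orbit), so it gives a bijection of the rays accumulating at $z_1$.

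Next I would establish finiteness. The cleanest route is to observe that a rational ray $\mathcal{R}_t$ landing on $\mathcal{O}$ is periodic (from the previous paragraph's argument, which only requires the orbit of $t$ under $\sigma$ to be finite, i.e. $t$ rational, to conclude preperiodicity — and one can run finiteness first in the form: all landing angles have a bounded denominator). Concretely, a periodic ray of period $n$ under $\sigma$ has angle of the form $t = a/((-d)^n - 1)$ type, i.e. a rational with denominator dividing $d^{2n} - 1$ or $d^n \pm 1$; the crucial structural fact is that \emph{periodic rays land at repelling or parabolic periodic points}, and only finitely many periodic rays of period up to any fixed bound exist. More directly: since all rays landing at $z_1$ are permuted by the bijection $\sigma^p$ acting on angles, and each has finite $\sigma$-orbit, they all lie in a single finite union of $\sigma$-cycles; but a priori there could be infinitely many such cycles. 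The finiteness must instead come from a \emph{topological} constraint: infinitely many rays landing at one point $z_1$ would force rays of arbitrarily high period, but distinct rays landing at a common point must occupy disjoint angular sectors (by the Unlinking property applied within a single $\mathcal{A}_1$), and more importantly, the landing point of a ray is determined by the ray, so infinitely many rays at $z_1$ would give infinitely many distinct angles accumulating somewhere on $\mathbb{R}/\mathbb{Z}$, while rays with accumulating angles cannot all land at the same point of the Julia set without forcing a contradiction with the local structure (a repelling/parabolic point has only finitely many accessing rays).

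The main obstacle I anticipate is the finiteness step, specifically justifying that a periodic point cannot have infinitely many rational rays landing at it. The standard holomorphic argument (Milnor) runs as follows, and I would adapt it to the anti-holomorphic setting: pass to the second iterate $f^2_c = (\bar z^d + c)$ composed with itself, which is an honest \emph{holomorphic} polynomial of degree $d^2$, so $z_1$ is a periodic point of the holomorphic map $f^2_c$ and the rays landing at $z_1$ are exactly the rays landing there for $f_c$. For holomorphic polynomials it is a theorem (via the classification of periodic points and the fact that only repelling and parabolic periodic points receive rational rays, each receiving only finitely many) that finitely many rays land at a periodic point. This reduction to $f^2_c$ lets me import the holomorphic finiteness result wholesale, which is the most efficient way to clear the main hurdle; the periodicity conclusion then follows from the bijective-permutation argument above, since any finite set permuted by $\sigma$ consists entirely of periodic angles.
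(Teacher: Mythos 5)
Your final reduction---passing to the second iterate $f^{\circ 2}$, which is an honest holomorphic polynomial of degree $d^2$ with the same B\"{o}ttcher coordinate and hence the same dynamical rays, and importing the classical result that all rays landing at a periodic point of a polynomial are periodic of a common period and hence finite in number---is exactly the argument the paper gives. The earlier material (the permutation argument for periodicity and the hand-waved ``topological constraint'' for finiteness) is superseded by that reduction and could be cut; the proof is correct and essentially identical to the paper's.
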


\begin{remark} An angle $t \in \mathbb{R}/\mathbb{Z}$ (resp. a ray $\mathcal{R}_t$) is periodic under multiplication by $-d$ (resp. under $f$) if and only if $t = a/b$ (in the reduced form), for some $ a, b\in \mathbb{N}$ with $g.c.d.\left(b,d\right)=1.$ On the other hand, $t$ (resp. $\mathcal{R}_t$) is strictly pre-periodic if and only if $t = a/b$ (in the reduced form), for some $ a, b\in \mathbb{N}$ with $g.c.d.\left(b,d\right)\neq1.$
\end{remark}

\begin{proof}
Note that if $f$ is an anti-polynomial of degree $d$, then $f^{\circ 2}$ is an ordinary polynomial of degree $d^2$. Also, $f$ and $f^{\circ 2}$ have the same B\"{o}ttcher maps and $\mathcal{R}_t$ is the same curve viewed as a dynamical ray of $f$ or of $f^{\circ 2}$.

If a dynamical ray at a rational angle $t$ lands at an $f$-periodic point, then there is also a periodic (under $f$) dynamical ray $\mathcal{R}_s$ landing at that point. Clearly, $\mathcal{R}_s$ is periodic under $f^{\circ 2}$ as well. But it is well-known (see \cite[Lemma 2.3]{M2a}) for ordinary polynomials that if a periodic ray lands at a periodic point, then all rays landing there are periodic with the same period and hence there are finitely many of them. This proves the lemma.
\end{proof}

In the above lemma, we did not claim that all rays landing at a periodic point have the same period under an anti-polynomial $f$. In fact, if a periodic point (of an anti-polynomial) has odd period, rays of different periods (under multiplication by $-d$) can indeed land there. This was first proved in \cite{NS}, we include a different proof largely for the sake of completeness. 

\begin{lemma}
If a periodic orbit (of an anti-polynomial) has even length $p$ and if a rational dynamical ray lands at some point of the orbit, then all the rays landing at the periodic orbit have equal period and the common ray period can be any multiple of $p$.
\end{lemma}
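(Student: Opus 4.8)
The plan is to reduce everything to the holomorphic polynomial $f^{\circ 2}$, exactly as in the proof of the Finitely Many Rays lemma. Recall that $f^{\circ 2}$ is an ordinary polynomial of degree $d^2$, it shares the B\"ottcher coordinate with $f$, and each curve $\mathcal{R}_t$ is simultaneously a dynamical ray of $f$ and of $f^{\circ 2}$, now with $\mathcal{R}_t \mapsto \mathcal{R}_{d^2 t}$ under $f^{\circ 2}$. Since the orbit length $p$ is even, each $z_i$ is a periodic point of $f^{\circ 2}$ whose exact $f^{\circ 2}$-period is $p/2$ (the smallest $k$ with $f^{\circ 2k}(z_i) = z_i$ forces $2k$ to be a multiple of $p$, whence $k = p/2$). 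Thus $\mathcal{O}$ decomposes into two $f^{\circ 2}$-cycles, each of length $p/2$.

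First I would record the elementary arithmetic of periods. A rational angle $t$ that is periodic under multiplication by $-d$ with exact period $n$ has exact period $n/\gcd(n,2)$ under multiplication by $(-d)^2 = d^2$; in particular, if $n$ is even this period equals $n/2$, so that $n = 2m$, where $m$ denotes the $f^{\circ 2}$-period of $\mathcal{R}_t$. Next I would observe that every ray landing at the cycle has $f$-period divisible by $p$: if $\mathcal{R}_t$ lands at $z_1$ and has exact $f$-period $n$, then $f^{\circ n}(\mathcal{R}_t) = \mathcal{R}_t$ lands at $f^{\circ n}(z_1)$, and uniqueness of the landing point of a rational ray forces $f^{\circ n}(z_1) = z_1$, i.e. $p \mid n$. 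Because $p$ is even, every such $n$ is even, and hence $n = 2m$ with $m$ the $f^{\circ 2}$-period of the ray.

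Now the main step is to invoke Milnor's orbit-portrait theory for the holomorphic polynomial $f^{\circ 2}$. By \cite[Lemma 2.3]{M2a}, since at least one rational (hence periodic) ray lands at the periodic point $z_1$ of $f^{\circ 2}$, \emph{all} rays landing at $z_1$ are periodic under $f^{\circ 2}$ with one common period $m$, and $m$ is a multiple of the $f^{\circ 2}$-orbit period $p/2$. Combining this with the arithmetic above, every ray landing at $z_1$ has the same $f$-period $n = 2m$. Since $f$ maps the rays landing at $z_i$ bijectively onto those landing at $z_{i+1}$ and preserves $f$-period (Orientation Reversal lemma), the same common period $2m$ occurs at every point of $\mathcal{O}$, which proves that all rays landing on the orbit share one period. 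Finally, writing $m = (p/2)r$ with $r \geq 1$ gives common $f$-period $2m = pr$, so the period is an arbitrary multiple of $p$; that every multiple is actually realized follows from the corresponding realization statement for the holomorphic polynomial $f^{\circ 2}$, in which any multiple of $p/2$ occurs as a ray period.

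I expect the only delicate point to be the bookkeeping in translating periods between $f$ (multiplier $-d$) and $f^{\circ 2}$ (multiplier $d^2$); the even parity of $p$ is precisely what removes the ambiguity, because it forces every ray period to be even and thereby pins down $n = 2m$ uniquely. This is in sharp contrast to the odd-period case, where the orientation-reversing first return map allows rays of different periods to land at the same point, and it is exactly this dichotomy that the lemma is meant to isolate.
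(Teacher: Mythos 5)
Your proof is correct and follows essentially the same route as the paper: both reduce to a holomorphic iterate of $f$ and invoke Milnor's Lemma 2.3, the only difference being that you pass to $f^{\circ 2}$ (with the attendant translation of periods between multiplication by $-d$ and by $d^{2}$, which you carry out correctly using the evenness of $p$), whereas the paper applies Milnor's argument directly to the first return map $f^{\circ p}$, which is holomorphic precisely because $p$ is even. The one soft spot is the final clause: deducing that every multiple of $p$ is \emph{realized} from the holomorphic realization statement for degree-$d^{2}$ polynomials is not a genuine reduction, since the polynomial realizing a given ray period need not be the second iterate of an anti-polynomial --- but the paper's own proof is equally terse on this point, and the claim is covered by its realization theorem (Theorem \ref{main}).
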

\begin{proof}
Consider a periodic orbit $\lbrace z_1 , z_2 ,\cdots,z_p\rbrace$ of even period $p$. The first return map $f^{\circ p}$ is holomorphic (being an even iterate of an anti-holomorphic map). One can now argue as in \cite[Lemma 2.3]{M2a} to complete the proof.
\end{proof}

\begin{lemma}[Restricted Periods] \label{Restricted periods} Let $f$ be an anti-polynomial and $z$ be a periodic point of odd period $p$ of $f$ such that at least one periodic dynamical ray lands at $z$. Then the period of any dynamical ray landing at $z$ is either $p$ or $2p$. Moreover, the number of rays of odd period $p$ landing at $z$ is at most 2.
\end{lemma}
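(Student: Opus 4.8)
The plan is to exploit the fact that, since $p$ is odd, the first return map $f^{\circ p}$ is an \emph{orientation-reversing} (anti-holomorphic) local diffeomorphism fixing $z$, and to combine this with the combinatorial rigidity that orientation-reversing maps impose on finite invariant subsets of the circle. First I would set up notation: by the Finitely Many Rays Lemma the set $\mathcal{A}$ of angles of all rays landing at $z$ is finite, nonempty, and consists of periodic angles, and I write $v = \vert \mathcal{A} \vert$. Since $f$ permutes the cycle by $z_i \mapsto z_{i+1}$ and sends $\mathcal{R}_t$ to $\mathcal{R}_{-dt}$, a ray landing at $z = z_1$ can return to $z_1$ only after a number of iterations divisible by $p$; hence the $f$-period of every angle in $\mathcal{A}$ is a multiple of $p$. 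This is the first reduction.

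The main step is the following. Because $f^{\circ p}$ fixes $z$, it maps rays landing at $z$ to rays landing at $z$, so $g := f^{\circ p}$ preserves $\mathcal{A}$; and being an odd iterate of an anti-holomorphic map, $g$ is orientation-reversing. I would then use the fact that an orientation-reversing circle homeomorphism preserving a finite set $\mathcal{A} = \lbrace a_0, a_1, \ldots, a_{v-1} \rbrace$ (listed counter-clockwise) must act on it by $a_i \mapsto a_{s-i}$ for some fixed $s$, with indices taken mod $v$; this is precisely the content of the Orientation Reversal Lemma, and it realizes $g\vert_{\mathcal{A}}$ as a reflection in the dihedral group $D_v$. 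From the normal form $a_i \mapsto a_{s-i}$ I read off two consequences. First, $g^{\circ 2}\vert_{\mathcal{A}} = \mathrm{id}$, so $f^{\circ 2p}$ fixes every ray landing at $z$, whence every angle in $\mathcal{A}$ has period dividing $2p$; combined with the first reduction this forces the period to be exactly $p$ or $2p$. Second, the fixed points of $g\vert_{\mathcal{A}}$ are the indices $i$ solving $2i \equiv s \pmod{v}$, which has at most two solutions (one if $v$ is odd, zero or two if $v$ is even). Since the rays fixed by $g = f^{\circ p}$ are exactly those of period $p$ (period dividing $p$ and a multiple of $p$), at most two rays of period $p$ land at $z$.

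The main obstacle, and really the only non-formal point, is justifying the reflection normal form $a_i \mapsto a_{s-i}$, i.e. that orientation reversal together with invariance of a finite set forces an order-two action having at most two fixed points. Once this dihedral rigidity is in hand, both assertions follow by counting, and the remainder is bookkeeping with the $t \mapsto -dt$ dynamics on angles. I would also treat the small cases $v \in \lbrace 1, 2 \rbrace$ explicitly, where the Orientation Reversal Lemma does not directly apply and the "reflection" is either the identity or a transposition, so that the two bounds remain valid there as well.
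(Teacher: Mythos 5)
Your argument is correct, and it reaches both conclusions by a route that is organized differently from the paper's. The shared core is the observation that the first return map $f^{\circ p}$, being an odd iterate of an anti-holomorphic map, reverses the cyclic order of the finite invariant set $\mathcal{A}$ of angles at $z$. From there the paper argues twice and more dynamically: for the period claim it follows the forward orbit $\theta_{n+1}=(-d)^p\theta_n$ of a single angle and shows that if $\theta_2\neq\theta_0$ the successive points are trapped in a nested sequence of arcs avoiding $\theta_0$, contradicting periodicity; for the second claim it notes that three period-$p$ angles would be fixed pointwise by $(-d)^p$, which cannot reverse the cyclic order of three distinct points. You instead classify the induced permutation once and for all: an order-reversing bijection of a cyclically ordered $v$-element set ($v\geq 3$) is a reflection $a_i\mapsto a_{s-i}$, so $f^{\circ 2p}$ fixes $\mathcal{A}$ pointwise (giving period $p$ or $2p$, after the reduction that all periods are multiples of $p$) and the fixed-point equation $2i\equiv s \pmod v$ has at most two solutions (giving the bound on period-$p$ rays). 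This buys a cleaner, one-shot proof of both halves and in fact slightly more information (exactly one period-$p$ ray when $v$ is odd, zero or two when $v$ is even, consistent with the classification in Theorem \ref{complete antiholomorphic}), at the cost of invoking the dihedral rigidity statement, which you correctly identify as the one point needing justification and which is standard. Two small cautions: $f^{\circ p}$ acts on angles by multiplication by $(-d)^p$, which is a degree-$(-d^p)$ covering rather than a circle homeomorphism, so the order reversal must be taken from the Orientation Reversal Lemma (applied $p$ times along the cycle, or to the anti-holomorphic local diffeomorphism $f^{\circ p}$ at $z$) rather than from homeomorphism properties; and the reflection normal form is a consequence of that lemma rather than its literal content. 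Your explicit handling of $v\in\{1,2\}$, where cyclic order is vacuous, closes the remaining gap.
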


\begin{proof}
Let $\mathcal{O}$ be a periodic orbit of odd period $k$ for an anti-polynomial $f$ of degree $d$ such that all rays landing at $\mathcal{O}$ are periodic. Let $\theta_0\in \mathcal{A}_j$ and define $\theta_{n+1} := (-d)^p\theta_n$. We claim that $\theta_2 = \theta_0$. Otherwise, $\theta_0, \theta_1, \theta_2$ are three distinct angles. There can be two cases: $\theta_0, \theta_1, \theta_2$ lie in clockwise or in counter-clockwise order. We work with the counter-clockwise case, the other one is similar.

\begin{figure}
\centering
\includegraphics[scale=0.38]{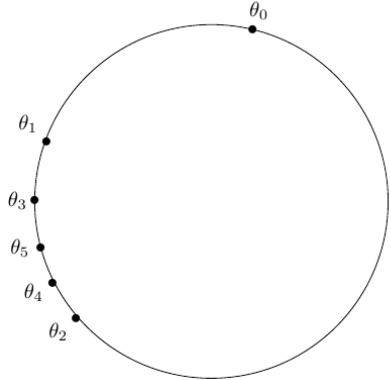}
\caption{The nesting property of the orbit of $\theta_0$ contradicts the periodicity.}
\label{nesting}
\end{figure}

Since $ \theta_0, \theta_1, \theta_2$ lie in counter-clockwise order, the orientation reversal of multiplication by $(-d)^p$ guarantees that $\theta_1, \theta_2, \theta_3$ lie in clockwise order; i.e. $\theta_3$ lies in the component of $\mathbb{R}/\mathbb{Z}\setminus \lbrace \theta_1, \theta_2\rbrace$ that doesn't contain $\theta_0$. Continuing this process inductively, one sees that $\theta_{n+1}$ belongs to the component of $\mathbb{R}/\mathbb{Z}\setminus \lbrace \theta_n, \theta_{n-1}\rbrace$ that doesn't contain $\theta_0$ (as shown in Figure \ref{nesting}). Therefore, $\theta_n \neq \theta_0, \forall n\in \mathbb{N}$: which contradicts the periodicity of $\theta_0$.

For the second part, suppose there are three angles $\theta, \theta^{\prime}, \theta^{\prime\prime} \in \mathcal{A}_j$ of period $p$ in (say) counter-clockwise order. Then multiplication by $(-d)^p$ would map them to $\lbrace\theta, \theta^{\prime}, \theta^{\prime\prime}\rbrace $ in clockwise order; an impossibility. Thus there can be at most two rays of period $p$.
\end{proof}

\subsection{Examples of Orbit Portraits}
1. Even orbit period. Let $f(z) = \bar{z}^2 - 5/4.$ $f$ admits the orbit portrait $\lbrace \lbrace 1/5 , 4/5\rbrace , \lbrace 2/5, 3/5\rbrace \rbrace$ and the angles are transitively permuted by the map $\theta \mapsto -2\theta$. 

2. Odd orbit period, transitivity. Let $f(z) = \bar{z}^2 - 7/4$. This map has a parabolic 3-cycle and all the angles of the associated orbit portrait $\displaystyle$ $\lbrace \lbrace 3/7 , 4/7 \rbrace$, $\lbrace 2/7 , 5/7 \rbrace$, $\lbrace 1/7 , 6/7 \rbrace \rbrace$ are permuted transitively by the dynamics.


3. Odd orbit period, non-transitivity. Let $f(z) = \bar{z}^2 - 1.77$. There is a periodic cycle of period 3 and the corresponding orbit portrait is $\lbrace \lbrace 1/9 , 8/9 \rbrace$, $\lbrace 2/9, 7/9 \rbrace$, $\lbrace 4/9, 5/9 \rbrace \rbrace$. All the rays are fixed by the first return map; i.e. their common ray period is 3.

4. Odd orbit period, rays with different periods. For $c \approx -1.746 + 0.008 i$, the anti-polynomial $f(z) = \bar{z}^2 + c$ has a $3$-periodic orbit with associated orbit portrait $\lbrace \lbrace 3/7, 4/9, 4/7 \rbrace , \lbrace 1/7, 1/9, 6/7 \rbrace , \lbrace 5/7, 7/9, 2/7 \rbrace \rbrace$. Note that exactly two rays of period 6 and one ray of period 3 land at each point of the orbit. 

Pictorial illustrations of various types of orbit portraits can be found in \cite[Figure 5]{NS}.
\subsection{Classification of Orbit Portraits for Unicritical Anti-polynomials}\label{classification}

So far all our discussions hold for general anti-polynomials of degree $d$. In this sub-section, we investigate the consequences of unicriticality on orbit portraits and give a complete classification of the orbit portraits that can arise in this setting. The main theorem is the following:

\begin{theorem}\label{complete antiholomorphic}
Let $f$ be a unicritical anti-polynomial of degree $d$ and $\mathcal{O} = \lbrace z_1 , z_2 ,\cdots$, $z_p\rbrace$ be a periodic orbit such that at least one rational dynamical ray lands at some $z_j$. Then the associated orbit portrait (which we assume to be non-trivial) $\mathcal{P} = \lbrace \mathcal{A}_1 , \mathcal{A}_2, \cdots, \mathcal{A}_p \rbrace$ satisfies the following properties:
\begin{enumerate}
\item Each $\mathcal{A}_j$ is a finite non-empty subset of $\mathbb{Q}/\mathbb{Z}$.

\item The map $\theta\mapsto -d\theta$ maps $\mathcal{A}_j$ bijectively onto $\mathcal{A}_{j+1}$ and reverses their cyclic order.

\item For each $j$, $\mathcal{A}_j$ is contained in some arc of length less than $1/d$ in $\mathbb{R}/\mathbb{Z}$.

\item \label{unlinked} For every $\mathcal{A}_i$, the translated sets $\mathcal{A}_{i,j}:=\mathcal{A}_i+j/d$, ($j=0,1,2,\cdots,d-1$) are unlinked from each other and from all other $\mathcal{A}_m$.

\item Each $\theta \in \mathcal{A}_j$ is periodic under $\theta \mapsto -d\theta$ and there are four possibilities for their periods: 
\begin{enumerate}

\item If $p$ is even, then all angles in $\mathcal{P}$ have the same period $rp$ for some $r\geq 1$.

\item If $p$ is odd, then one of the following three possibilities must be realized:
\begin{enumerate}
\item $\vert \mathcal{A}_j\vert = 2$ and both angles have period $p$.

\item $\vert \mathcal{A}_j\vert = 2$ and both angles have period $2p$.

\item $\vert \mathcal{A}_j\vert = 3$; one angle has period $p$ and the other two angles have period $2p$.
\end{enumerate}
\end{enumerate}
\end{enumerate}
\end{theorem}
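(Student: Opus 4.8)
The plan is to verify the five properties in turn, reading (1)--(4) off the three basic lemmas together with the covering structure of $\theta\mapsto-d\theta$ and unicriticality, and then isolating the genuinely new phenomenon in (5b). Property (1) is immediate from the Finitely Many Rays lemma and the remark that periodic angles under $\theta\mapsto-d\theta$ are rational. For (2), the Orientation Reversal lemma shows $\theta\mapsto-d\theta$ carries $\mathcal{A}_j$ into $\mathcal{A}_{j+1}$ and reverses cyclic order; surjectivity holds because every ray landing at $z_{j+1}=f(z_j)$ is the image of a ray landing at $z_j$, and running once around the cycle gives $|\mathcal{A}_j|\ge|\mathcal{A}_{j+1}|\ge\cdots\ge|\mathcal{A}_{j+p}|=|\mathcal{A}_j|$, so all cardinalities agree and the surjections are bijections.

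The next step, where unicriticality enters, is (3) and (4). The key observation is that neither the critical point $0$ nor the critical value $c$ lies on $\mathcal{O}$: a periodic critical point would be Fatou (no rays land), and if $c=z_j$ then its only preimage $0$ would have to be the periodic point $z_{j-1}$, which is impossible. Hence each $z_{i+1}$ is a regular value, its $d$ preimages are distinct regular points, and from the rotational structure of the $d$-fold cover $\theta\mapsto-d\theta$ the angle sets of the rays landing at these $d$ preimages are exactly the translates $\mathcal{A}_i+j/d$. Since rays to distinct points cannot cross, these translates are pairwise unlinked and unlinked from every $\mathcal{A}_m$, giving (4). Property (3) then follows formally: the $\mathcal{A}_i+j/d$ are rigid $1/d$-rotations of one another, and if the shortest arc containing $\mathcal{A}_i$ had length $\ge 1/d$ then $\mathcal{A}_i$ and $\mathcal{A}_i+1/d$ would interlace, contradicting (4); so each $\mathcal{A}_j$ lies in an arc of length $<1/d$.

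For (5), each angle is periodic by the Finitely Many Rays lemma. When $p$ is even the first return map $f^{\circ p}$ is holomorphic, so the even-period lemma gives a common period $rp$, which is (5a). The heart is (5b). The first-return action on angles is $(-d)^p=-d^p$, and by Lemma \ref{Restricted periods} every angle of $\mathcal{A}_j$ has period $p$ or $2p$, so $(-d)^{2p}$ is the identity on $\mathcal{A}_j$; thus $-d^p$ acts as an orientation-reversing bijection of order two, whose fixed points are the period-$p$ rays and whose $2$-cycles are the period-$2p$ pairs. Since $\mathcal{A}_j$ sits in an arc of length $<1/d<1/2$, reversing cyclic order means reversing the linear order along that arc, so writing $\mathcal{A}_j=\{a_0<\cdots<a_{n-1}\}$ the involution is the reflection $a_i\mapsto a_{n-1-i}$. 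Invoking Lemma \ref{three}, which bounds the number of rays landing at an odd-period point by $3$, leaves only $n\in\{2,3\}$: for $n=2$ the involution of two points (unconstrained by orientation) either fixes both angles (case (i)) or swaps them (case (ii)); for $n=3$ the reflection is forced to fix the middle angle and swap the outer two, which is precisely case (iii).

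The main obstacle is Lemma \ref{three} itself. I would stress that the bound $n\le 3$ is \emph{not} a formal consequence of (1)--(4): one can exhibit four rational angles lying in an arc of length $<1/d$ that satisfy all the reflection and period relations of a putative $n=4$ portrait, so the restriction is genuinely dynamical. The point is that not only $\mathcal{A}_j$ but every image $\mathcal{A}_{j+i}$ must again be confined to an arc of length $<1/d$ under the expanding map $-d$, and it is the combination of this expansion constraint with the orientation-reversing reflection that excludes $n\ge 4$. This is exactly the feature that distinguishes the antiholomorphic odd-period case from the holomorphic one, where (the return map being orientation preserving) arbitrarily many rays may co-land; making the expansion-plus-reflection constraint yield the sharp bound is where the real work lies.
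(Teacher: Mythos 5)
Your proposal is correct, and at the top level it does exactly what the paper does: the paper's own proof of Theorem \ref{complete antiholomorphic} is a one-paragraph assembly of the preceding lemmas (the $d$-fold rotation symmetry of the Julia set for property (\ref{unlinked}), and the Critical Arc, Restricted Periods, and No-More-Than-Three-Rays lemmas for the rest), and your write-up is a more detailed version of that same assembly. Two of your micro-routes differ and are worth recording. First, you obtain property (3) as a purely combinatorial consequence of property (4): if the minimal arc containing $\mathcal{A}_i$ had length $\geq 1/d$, then $\mathcal{A}_i$ and $\mathcal{A}_i+1/d$ could not lie in disjoint intervals. This is valid (one checks that one endpoint of the minimal arc translated by $1/d$ falls inside that arc while the other does not, or the translates share a point), and it is more elementary than the paper's Critical Arc lemma, which argues with sectors containing critical points -- though the paper needs that lemma anyway to set up the critical/characteristic arc machinery on which Lemma \ref{three} rests, so nothing is saved globally. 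Second, organizing (5b) around the orientation-reversing involution $(-d)^p|_{\mathcal{A}_j}$ is clean and yields all three cases at once; but your assertion that this involution must be the reflection $a_i\mapsto a_{n-1-i}$ fixing the \emph{middle} angle does not follow from cyclic-order reversal plus containment in a short arc (for three points, each of the three reflections reverses the cyclic order). This is harmless for the theorem as stated, since any orientation-reversing involution of three points has exactly one fixed point, which is all that case (iii) requires; identifying \emph{which} angle is fixed is the content of the paper's Lemma \ref{lengths}, proved by a separate argument. Finally, you rightly single out Lemma \ref{three} as the real content and are entitled to cite it, but the mechanism you gesture at (``expansion plus reflection'') is not how the paper establishes it: the paper's proof runs through the uniqueness of the shortest complementary arc (the characteristic arc), the fact that every complementary arc of $\mathcal{A}_1$ other than the critical and critical value arcs must be a diffeomorphic forward image of the characteristic arc, and the computation in Lemma \ref{mixed} that the first-return orbit of the characteristic arc's endpoints consists of only three angles, so that at most one such image can exist.
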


We divide the proof in various lemmas, the next two are essentially due to Milnor, who proved them for quadratic polynomials in \cite{M2a}.

\begin{lemma}[The Critical Arc]
Let $f$ be a unicritical anti-polynomial of degree $d$ and $\mathcal{O} = \lbrace z_1 , z_2 ,\cdots,z_p\rbrace$ be an orbit of period $p$. Let $\mathcal{P} = \lbrace \mathcal{A}_1 , \mathcal{A}_2, \cdots, \mathcal{A}_p \rbrace$ be the corresponding orbit portrait. For each $j \in \lbrace 1,2,\cdots,p\rbrace$, $\mathcal{A}_j$ is contained in some arc of length less than $1/d$ in $\mathbb{R}/\mathbb{Z} $. Thus, all but one connected component of $\left(\mathbb{R}/\mathbb{Z}\right) \setminus \mathcal{A}_j $ maps bijectively to some connected component of $\left(\mathbb{R}/\mathbb{Z}\right) \setminus \mathcal{A}_{j+1}$ and the remaining complementary arc of $\left(\mathbb{R}/\mathbb{Z}\right) \setminus \mathcal{A}_j$ covers one particular complementary arc of $\mathcal{A}_{j+1}$ $d$-times and all others $(d-1)$-times.
\end{lemma}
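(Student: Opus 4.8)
The plan is to combine the Orientation Reversal lemma with the fact that $f$ has a \emph{single} critical point, the latter being the crucial input that forces all but one complementary arc to be short.

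First I would record the bijection on angles. Writing $M$ for the map $\theta\mapsto -d\theta$, the Orientation Reversal lemma gives $M(\mathcal{A}_j)\subseteq\mathcal{A}_{j+1}$ and shows $M$ reverses cyclic order on $\mathcal{A}_j$, hence is injective there; running around the period-$p$ cycle forces $\vert\mathcal{A}_1\vert=\cdots=\vert\mathcal{A}_p\vert=:v$ and upgrades $M\colon\mathcal{A}_j\to\mathcal{A}_{j+1}$ to an order-reversing bijection. The $v$ rays landing at $z_j$ cut the plane into $v$ open sectors $S_1,\dots,S_v$, where $S_k$ subtends the complementary arc $I_k$ of $\mathcal{A}_j$; similarly the rays at $z_{j+1}$ give sectors $S'_1,\dots,S'_v$ subtending the complementary arcs of $\mathcal{A}_{j+1}$. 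Since a ray landing at a periodic point carries no precritical point and $z_j\neq 0$ (a periodic critical point would give a superattracting cycle at which no ray lands), the critical point $0$ lies in the interior of exactly one sector, say $S_{k_0}$.

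The heart of the argument is the dichotomy coming from unicriticality. For $k\neq k_0$ the sector $S_k$ contains no critical point, so $f\vert_{S_k}$ is proper onto its image and unramified, hence a homeomorphism onto the sector $S'_{\sigma(k)}$ at $z_{j+1}$; on the circle this says $I_k$ maps homeomorphically onto one complementary arc of $\mathcal{A}_{j+1}$, multiplying length by $d$, so that $\ell(I_k)<1/d$. Local injectivity of $f$ at $z_j$ shows these images are distinct, so the $v-1$ short arcs account for $v-1$ of the $v$ complementary arcs of $\mathcal{A}_{j+1}$, leaving exactly one distinguished arc $I'_{m^\ast}$ uncovered by them. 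By contrast $f\vert_{S_{k_0}}$ carries the critical point and has local degree $d$ there, so it is a degree-$d$ branched cover; thus $I_{k_0}$ is the unique long arc.

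It remains to read off the two quantitative claims by a covering count. Over a generic point of a complementary arc $I'_m$ the map $M$ has exactly $d$ preimages; the short arcs contribute $1$ to every $m\neq m^\ast$ and $0$ to $m^\ast$, so the long arc $I_{k_0}$ must cover $I'_{m^\ast}$ exactly $d$ times and every other $I'_m$ exactly $d-1$ times, which is the asserted covering behaviour. Integrating lengths with these multiplicities gives $d\,\ell(I_{k_0})=d\,\ell(I'_{m^\ast})+(d-1)\bigl(1-\ell(I'_{m^\ast})\bigr)$, whence $\ell(I_{k_0})=\tfrac{d-1}{d}+\tfrac{1}{d}\ell(I'_{m^\ast})>\tfrac{d-1}{d}$; since the complement of $I_{k_0}$ is an arc of length $1-\ell(I_{k_0})<1/d$ containing $\mathcal{A}_j$, the set $\mathcal{A}_j$ lies in an arc of length less than $1/d$.

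I expect the main obstacle to be the global, rather than merely local, control of $f$ on each sector: one must verify that a sector free of critical points really does map homeomorphically onto a single sector (using properness together with the simple connectivity of the sectors out to infinity), and that the critical point sits strictly inside a unique sector rather than on a bounding ray. This is also precisely the step where unicriticality is indispensable: were there several critical points, they could be distributed among several sectors, permitting more than one long arc and hence genuinely richer orbit portraits.
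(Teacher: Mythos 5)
Your proof is correct, and it turns on the same crucial input as the paper's: a sector subtending an arc of length greater than $1/d$ maps over the whole plane and must therefore contain the unique critical point. The organization, however, is genuinely different. The paper argues by contradiction: using orientation reversal it first locates two points $\alpha,\beta\in\mathcal{A}_j$ with $\ell\left((\alpha,\beta)\right)<1/d$ and with the arcs $[\alpha-1/d,\alpha)$ and $(\beta,\beta+1/d]$ free of $\mathcal{A}_j$, and then shows that any $\gamma\in\mathcal{A}_j$ outside $(\alpha,\beta)$ would produce \emph{two} complementary arcs of length greater than $1/d$, hence two critical points. You argue directly: the critical point lies in exactly one sector, every other sector maps homeomorphically onto a single sector at $z_{j+1}$ so its arc is short, and a degree count plus a length integral then force $\ell(I_{k_0})>(d-1)/d$. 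Your route buys the covering multiplicities ($d$ times on one arc, $d-1$ times on the others) as an explicit by-product, whereas the paper states them without separate justification; what it costs is the analytic step you flag yourself, namely that a critical-point-free sector maps homeomorphically onto a single minimal sector. The clean way to close that step is precisely the paper's contrapositive --- an arc of length at least $1/d$ would make the sector's image cover the critical value, pulling the critical point (the unique preimage of the critical value, by unicriticality) into the sector --- after which injectivity of multiplication by $-d$ on a short arc, together with the order-reversing bijection $\mathcal{A}_j\to\mathcal{A}_{j+1}$ sending adjacent angles to adjacent angles, identifies the image of each short arc as exactly one complementary arc of $\mathcal{A}_{j+1}$.
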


\begin{proof}
Let $\theta \in \mathcal{A}_j$. Let $\beta$ be the element of $\mathcal{A}_j$ that lies in $\left[\theta , \theta+1/d\right)$ and is closest to $\left(\theta+1/d\right).$ Similarly, let $\alpha$ be the member of $\mathcal{A}_j$ that lies in $\left(\theta-1/d , \theta \right]$ and is closest to $\left(\theta-1/d\right).$ Note that there is no element of $\mathcal{A}_j$ in $\left(\beta , \beta+1/d\right];$ otherwise the orientation reversal property of multiplication by $-d$ would be violated. Similarly, $\left[\alpha-1/d , \alpha \right)$ contains no element of $\mathcal{A}_j$. Also, the arc $\left( \alpha , \beta \right)$ must have length less than $1/d$. We will show that the entire set $\mathcal{A}_j$ is contained in the arc $\left( \alpha , \beta \right)$ of length less than $1/d$.

\begin{figure}[!ht]
\centering
\includegraphics[scale=0.38]{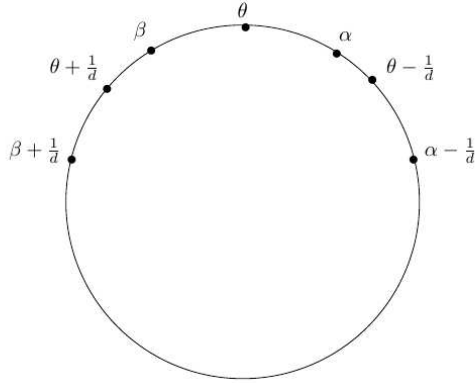}
\caption{No element of $\mathcal{A}_j$ lies outside $\left( \alpha , \beta \right)$.}
\label{critical_arc}
\end{figure}
If there exists some $\gamma \in \mathcal{A}_j$ lying outside $\left( \alpha , \beta \right)$, then $\gamma \in \left( \beta +1/d , \alpha -1/d\right)$. Therefore, there exist at least two complementary arcs of $\mathbb{R}/\mathbb{Z}\setminus \mathcal{A}_j$ of length greater than $1/d$. Both these arcs cover the whole circle and some other arc(s) of $\mathbb{R}/\mathbb{Z}\setminus \mathcal{A}_{j+1}$ under multiplication by $-d$. In the dynamical plane of $f$, the two corresponding sectors (of angular width greater than $1/d$) map to the whole plane and some other sector(s) under the dynamics. Therefore, both these sectors contain at least one critical point of $f$. This contradicts the unicriticality of $f$.

This proves that the entire set $\mathcal{A}_j$ is contained in the arc $\left( \alpha , \beta \right)$ of length less than $1/d$.
\end{proof}

\begin{remark} Following Milnor \cite{M2a}, the largest component of $\left(\mathbb{R}/\mathbb{Z}\right) \setminus \mathcal{A}_j $ (of length greater than $(1 - 1/d))$ will be called the \emph{critical arc} of $\mathcal{A}_j$ and the complementary component of $\mathcal{A}_{j+1}$ that is covered $d$-times  by the critical arc of $\mathcal{A}_j$, will be called the \emph{critical value arc} of $\mathcal{A}_{j+1}.$ In the dynamical plane of $f$, the two rays corresponding to the two endpoints of the critical arc of $\mathcal{A}_j$ along with their common landing point bound a sector containing the unique critical point of $f$. This sector is called a \emph{critical sector}. Analogously, the sector bounded by the two rays corresponding to the two endpoints of the critical value arc of $\mathcal{A}_{j+1}$ and their common landing point contains the unique critical value of $f$. This sector is called a \emph{critical value sector}. 
\end{remark}

\begin{lemma}[The Characteristic Arc] \label{blah}
Among all the complementary arcs of the various $\mathcal{A}_j$'s, there is a unique one of minimum length. It is a critical value arc for some $\mathcal{A}_j $ and is strictly contained in all other critical value arcs.
\end{lemma}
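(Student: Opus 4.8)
The plan is to combine the covering dictionary supplied by the Critical Arc lemma with the decisive feature of unicriticality, namely that $f$ has a \emph{single} critical value. First I would make the covering bookkeeping precise. Writing $v = |\mathcal{A}_j|$, the complement of $\mathcal{A}_j$ consists of the critical arc (of length $> 1 - 1/d$) together with $v-1$ short arcs (each of length $< 1/d$, which exist since the portrait is non-trivial). A degree count against the Critical Arc lemma shows that a short complementary arc $J$ of $\mathcal{A}_{j+1}$ is the bijective image under $\theta \mapsto -d\theta$ of a unique short complementary arc $I$ of $\mathcal{A}_j$ precisely when $J$ is \emph{not} the critical value arc $V_{j+1}$; in that case $|J| = d|I| > |I|$, while $V_{j+1}$ receives no bijective preimage and is instead covered $d$ times by the critical arc of $\mathcal{A}_j$.

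With this dictionary in hand I would locate the shortest arc. Since every critical arc has length $> 1 - 1/d \geq 1/2$ while every short arc has length $< 1/d \leq 1/2$, no critical arc can realize the minimum, so any minimal-length complementary arc $I_{\min}$ is short. If $I_{\min}$ were not a critical value arc, the dictionary would produce a strictly shorter complementary arc of length $|I_{\min}|/d < |I_{\min}|$ (using $d \geq 2$), contradicting minimality; hence $I_{\min}$ must be a critical value arc.

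Next I would prove that the critical value arcs are totally ordered by strict inclusion, and this is where unicriticality enters. The critical value sector attached to each $V_j$ contains the unique critical value $c$ of $f$. Given two such sectors $S, S'$ with distinct vertices $z_i \neq z_j$, their boundaries (two rays plus a landing point each) are disjoint, because distinct dynamical rays cannot cross; as two Jordan regions with disjoint boundaries sharing the common point $c$, one must contain the other. Translating back to angles, the corresponding critical value arcs are nested, and since distinct orbit points yield distinct sectors the nesting is strict. Proper inclusion of arcs forces strictly smaller length, so there is a unique innermost, hence strictly shortest, critical value arc, strictly contained in every other critical value arc.

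Finally I would assemble the pieces: any minimal-length complementary arc is a critical value arc (paragraph two), and the critical value arcs form a strict inclusion chain with a unique innermost, hence strictly shortest, member (paragraph three). Therefore the global minimum over all complementary arcs of all $\mathcal{A}_j$ is attained uniquely, at that innermost critical value arc, which is strictly contained in all the others. The main obstacle is the nesting step: one must argue carefully that distinct critical value sectors have disjoint boundaries and then convert the topological containment of sectors in the dynamical plane of $f$ into strict containment of arcs in $\mathbb{R}/\mathbb{Z}$; the covering bookkeeping of the first two paragraphs is routine by comparison.
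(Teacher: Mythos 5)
Your proposal is correct and follows essentially the same route as the paper: the minimal arc must be a critical value arc because otherwise it would be the diffeomorphic image of an arc of $1/d$ times its length, and all critical value sectors contain the unique critical value, forcing the corresponding arcs to be strictly nested. You spell out the covering bookkeeping and the sector-disjointness argument in more detail than the paper (which appeals directly to the unlinking property), but the underlying ideas are the same.
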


\begin{proof}
Among all the complementary arcs of the various $\mathcal{A}_j$'s, there is clearly at least one, say $\left(t^-,t^+\right)$, of minimal length. This arc must be a critical value arc for some $\mathcal{A}_j$: else it would be the diffeomorphic image of some arc of $1/d$ times its length. Let $\left(a,b\right)$ be a critical value arc for some $\mathcal{A}_k$ with $k\neq j$. Both the critical value sectors in the dynamical plane of $f$ contain the unique critical value (say, $c$) of $f$. Clearly, $\left(t^-,t^+\right) \bigcap \left(a,b\right) \neq \emptyset$. From the unlinking property of orbit portraits, it follows that $\left(t^-,t^+\right)$ and $\left(a,b\right)$ are strictly nested; i.e. the critical value sector $\left(a,b\right)$ strictly contains $\left(t^-,t^+\right)$.
\end{proof}

This shortest arc $\mathcal{I}_{\mathcal{P}}$ is called the \emph{characteristic arc} of the orbit portrait and the two angles at the ends of this arc are called the \emph{characteristic angles}. The characteristic angles, in some sense, are crucial to the understanding of orbit portraits.

In the next lemma, we investigate the special properties of orbit portraits associated with a periodic orbit of \emph{odd} period for a unicritical anti-polynomial in terms of their characteristic angles.

\begin{lemma} [Different Periods of Characteristic Angles] \label{mixed}
Let $\displaystyle$ $\mathcal{O} = \lbrace z_1, z_2,$ $\cdots, z_p\rbrace$ be a periodic orbit of a unicritical anti-polynomial $f$ with associated orbit portrait $\mathcal{P}$. If $p$ is odd and $\vert \mathcal{A}_1 \vert \geq 3,$ then one characteristic angle of $\mathcal{P}$ has period $p$ and the other has period $2p$.
\end{lemma}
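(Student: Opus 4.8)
The plan is to study the first return map on angles, $\nu:\theta\mapsto(-d)^p\theta=-d^p\theta$, restricted to $\mathcal{A}_1$. Since $\sigma:\theta\mapsto-d\theta$ carries $\mathcal{A}_j$ bijectively onto $\mathcal{A}_{j+1}$ while reversing cyclic order, $\nu=\sigma^{\circ p}$ maps $\mathcal{A}_1$ to itself, and because $p$ is odd it reverses the cyclic order of $\mathcal{A}_1$. By Lemma \ref{Restricted periods} every angle of $\mathcal{A}_1$ has period $p$ or $2p$, so $\nu^{\circ 2}=\mathrm{id}$ on $\mathcal{A}_1$. Hence $\nu|_{\mathcal{A}_1}$ is an orientation-reversing involution of the cyclically ordered finite set $\mathcal{A}_1$, i.e.\ a combinatorial reflection: an angle has period $p$ exactly when it is $\nu$-fixed, and period $2p$ exactly when it lies in a $\nu$-orbit of length $2$. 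The first thing I would record is that such a reflection fixes at most two points (recovering the bound of Lemma \ref{Restricted periods}), and that if it fixes two they are necessarily cyclically antipodal in $\mathcal{A}_1$.

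Next I would exploit that the two characteristic angles $t^-,t^+$ are the endpoints of the characteristic arc $\mathcal{I}_{\mathcal{P}}$, which by Lemma \ref{blah} is the globally shortest complementary arc and (after relabeling) is the critical value arc of $\mathcal{A}_1$. Being the endpoints of a \emph{gap} of $\mathcal{A}_1$, the angles $t^-,t^+$ are \emph{adjacent} elements of $\mathcal{A}_1$. They therefore cannot both be $\nu$-fixed: two fixed points of the reflection are cyclically antipodal, hence non-adjacent once $|\mathcal{A}_1|\ge 3$, whereas $t^-,t^+$ are adjacent. This disposes of the case that both characteristic angles have period $p$.

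It remains to rule out that both have period $2p$, and this is the crux. Here I would use that $\mathcal{I}_{\mathcal{P}}=(t^-,t^+)$ is the globally shortest gap and bounds the critical value sector $V$, which contains the unique critical value $c=f(0)$. Pull $V$ back by the first return map $f^{\circ p}$: since the cycle is repelling or parabolic, $z_1$ is not the critical point, so $f^{\circ p}$ is a local homeomorphism at $z_1$ and permutes the rays landing there by $\nu$. Thus the component $\widetilde V$ of the $f^{\circ p}$-preimage of $V$ containing the germ at $z_1$ is bounded by $R_{\nu(t^-)},R_{\nu(t^+)}$ and has no portrait ray in its interior (such a ray would map into the gap $V$). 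Were this pullback unramified, $\widetilde V$ would be a gap sector of angular width $\ell(\mathcal{I}_{\mathcal{P}})/d^p<\ell(\mathcal{I}_{\mathcal{P}})$, contradicting the minimality asserted in Lemma \ref{blah}. Hence the branch must ramify over a critical value of $f^{\circ p}$ lying in $V$; by unicriticality this obstruction is the single critical value $c$, and tracing it back through the cycle forces the reflection axis — the period-$p$ ray — to bound $V$, so that exactly one of $t^-,t^+$ is $\nu$-fixed. Equivalently one may argue on the preimage side, where $t^-,t^+$ are the $\sigma$-images of the two endpoints of the critical arc (largest gap) of $\mathcal{A}_p$, and the same minimality-plus-unicriticality bookkeeping shows that exactly one of those endpoints is fixed by $\sigma^{\circ p}$ on $\mathcal{A}_p$. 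I expect the delicate point to be precisely the control of this ramified pullback: one must show the critical value lies on the correct side of the fixed ray so that the obstruction \emph{forces}, rather than merely permits, one characteristic angle to be $\nu$-fixed. The combinatorial reflection picture together with the minimality of $\mathcal{I}_{\mathcal{P}}$ are the tools I would rely on to close this gap.
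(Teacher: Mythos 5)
Your first half is correct and clean: writing $\nu=(-d)^p$ for the first return map on $\mathcal{A}_1$, noting that $\nu$ is an orientation-reversing involution (by Lemma \ref{Restricted periods}) and hence a combinatorial reflection whose two possible fixed points are antipodal in $\mathcal{A}_1$, you correctly rule out that the two characteristic angles --- which are \emph{adjacent} in $\mathcal{A}_1$ since they bound a complementary arc --- are both $\nu$-fixed. This disposes of the case ``both of period $p$'' and is a nice, slightly more structural way of packaging what the paper gets from the second half of Lemma \ref{Restricted periods}.

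The genuine gap is the other half, which is the actual content of the lemma: you must show that one characteristic angle \emph{is} $\nu$-fixed, i.e.\ rule out that both have period $2p$. Your ramified-pullback sketch does not do this, and you say so yourself (``the tools I would rely on to close this gap''). Concretely: showing that the univalent pullback of the critical value sector would violate minimality of $\mathcal{I}_{\mathcal{P}}$ only tells you that the pullback branch meets the critical point at some stage; from this you assert that ``the period-$p$ ray must bound $V$,'' but no argument is given, and ramification by itself only \emph{permits} rather than \emph{forces} a fixed bounding ray. The paper closes exactly this point by arguing in the forward direction: since the gap $\mathcal{I}^{+}$ of $\mathcal{A}_1$ adjacent to $\mathcal{I}_{\mathcal{P}}$ (the shorter of the two neighbours) is not a critical value arc, it is the diffeomorphic image of \emph{some} critical value arc $\mathcal{I}_c$ under $(-d)^m$; a length-and-unlinking comparison with the longer neighbour $\mathcal{I}^{-}$ forces $\mathcal{I}_c=\mathcal{I}_{\mathcal{P}}$, so $\mathcal{I}^{+}=(-d)^{pq}\mathcal{I}_{\mathcal{P}}$. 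Then a parity argument on $q$ (if $q$ were even the shared endpoint scheme would collapse $\mathcal{A}_1$ to two points) shows $q$ is odd, whence the common endpoint of $\mathcal{I}_{\mathcal{P}}$ and $\mathcal{I}^{+}$ satisfies $b=(-d)^{pq}b$, i.e.\ is $\nu$-fixed of period $p$, while the other two endpoints form a $\nu$-two-cycle of period $2p$. Some version of this ``the characteristic arc maps diffeomorphically onto its shorter neighbour'' step (which is also reused in Lemmas \ref{three} and \ref{lengths}) is what your proposal is missing; without it the conclusion does not follow.
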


\begin{proof}
Without loss of generality, we can assume that the characteristic arc $\mathcal{I}_{\mathcal{P}}$ is a critical value arc of $\mathcal{A}_1$. Since $\vert \mathcal{A}_1 \vert \geq 3,$ $\mathcal{A}_1$ has at least three complementary components. Let $\mathcal{I}^{+}$ be the arc just to the right of $\mathcal{I}_{\mathcal{P}}$ and $I^{-}$ be the one just to the left of $\mathcal{I}_{\mathcal{P}}$. We can also assume that $\mathcal{I}^{-}$ no shorter than $\mathcal{I}^{+}$; i.e. $\textit{l}\left(\mathcal{I}^{-}\right) \geq \textit{l}\left(\mathcal{I}^{+}\right).$ Since $\mathcal{I}^{+}$ is not the critical value arc of $\mathcal{A}_1$, there must exist a critical value arc $\mathcal{I}_c$ which maps diffeomorphically onto $I^{+}$ under some iterate of multiplication by $-d$; i.e. $\mathcal{I}^{+} = \left( -d \right)^m \mathcal{I}_c$, for some $m \geq 1$.

We claim that $\mathcal{I}_c = \mathcal{I}_{\mathcal{P}}$. Otherwise, $\mathcal{I}_c$ would properly contain the characteristic arc $\mathcal{I}_{\mathcal{P}}$. Since $\mathcal{I}_c$ is strictly smaller than $\mathcal{I}^{+}$, $\mathcal{I}_c$ cannot contain $\mathcal{I}^{+}$. So one end of $\mathcal{I}_c$ must lie in $\mathcal{I}^{+}$; but then it follows from the unlinking property that both ends of $\mathcal{I}_c$ are in $\mathcal{I}^{+}$. Therefore, $\mathcal{I}_c$ strictly contains $\mathcal{I}^{-}$. But this is impossible because $\textit{l}\left(\mathcal{I}^{-}\right) \geq \textit{l}\left(\mathcal{I}^{+}\right) > \textit{l}\left(\mathcal{I}_{c}\right). $

Therefore, $\mathcal{I}^{+} = \left( -d \right)^m \mathcal{I}_{\mathcal{P}}$. Note that $m$ must be a proper multiple of $p$, say $m=pq$, for some $q \geq 1$. Also let, $\mathcal{I}^{+} = \left( a , b \right)$ and $\mathcal{I}_{\mathcal{P}} = \left( b , c \right)$.  Now we consider two cases:

\emph{Case I. $q$ is even.} Since $\mathcal{I}_{\mathcal{P}}$ maps to $\mathcal{I}^{+}$ by an orientation preserving diffeomorphism, we have: $b = d^{pq} c$ and $a = d^{pq} b$. So, $a = \left(d^{2p}\right)^q c = c$ (for an odd-periodic orbit, every ray is fixed by the second return map). This contradicts the fact that $\vert \mathcal{A}_1 \vert \geq 3.$ So $q$ must be odd.

\begin{figure}[!ht]
\centering
\includegraphics[scale=0.38]{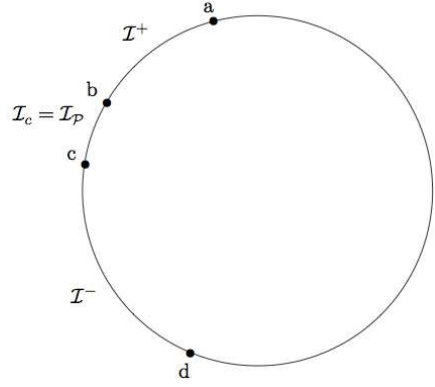}
\caption{The characteristic arc $\mathcal{I}_{\mathcal{P}}$ maps to the shorter adjacent arc $\mathcal{I}^{+}$.}
\label{mixed1}
\end{figure}

\emph{Case II. $q$ is odd.} In this case, the arc $\left( b , c \right)$ maps to $\left( a , b \right)$ by an orientation reversing diffeomorphism. Hence, $b = \left(-d\right)^{pq} b$ and $a = \left(-d\right)^{pq} c.$  Since $b$ is fixed by an odd iterate of the first return map, it must have period $p$. On the other hand, $a$ and $c$ are two distinct angles such that $a$ belongs to the orbit of $c$ under the first return map of $\mathcal{A}_1$. Since every angle is fixed by the second return map, it follows that $\lbrace a ,c \rbrace$ is a 2-cycle under multiplication by $\left(-d\right)^p$; hence they both have period $2p$.
\end{proof}

The following lemma vastly limits the possibilities of orbit portraits for an odd-periodic cycle.

\begin{lemma}[No More Than Three Rays] \label{three}
If $f$ is a unicritical anti-polynomial, then at most three periodic dynamical rays can land at a periodic point of odd period of $f$. 
\end{lemma}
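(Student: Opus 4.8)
The plan is to reduce everything to the combinatorics already developed and then to push the minimality argument of Lemma~\ref{mixed} one step further. Write $v = \vert\mathcal{A}_1\vert$ and suppose, for contradiction, that $v \geq 4$. By the Finitely Many Rays and Restricted Periods lemmas every ray landing at our odd-period point is periodic of period $p$ or $2p$. The first observation I would record is that $g := (-d)^p$ maps $\mathcal{A}_1$ to itself and, by the Orientation Reversal lemma, reverses its cyclic order; since the Critical Arc lemma confines $\mathcal{A}_1$ to an arc of length less than $1/d$, this cyclic reversal is simply the order-reversing flip $\theta_i \mapsto \theta_{v+1-i}$ of $\mathcal{A}_1 = \lbrace\theta_1 < \cdots < \theta_v\rbrace$. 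Consequently the angles of period $p$ are exactly the fixed points of this flip: the single middle angle when $v$ is odd, and none when $v$ is even.

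The even case is then immediate: if $v \geq 4$ is even, every angle of $\mathcal{A}_1$ has period $2p$, so in particular both characteristic angles have period $2p$, contradicting Lemma~\ref{mixed}. Hence $v$ must be odd and $\geq 5$; write $v = 2k+1$ with $k \geq 2$.

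For the odd case I would set up exactly as in Lemma~\ref{mixed}. After relabelling, the characteristic arc is the critical value arc of $\mathcal{A}_1$; by Lemma~\ref{mixed} together with the flip it is adjacent to the unique period-$p$ angle $\theta_{k+1}$, so I may take $\mathcal{I}_{\mathcal{P}} = (\theta_{k+1}, \theta_{k+2})$ and consider its neighbour $\mathcal{I}^{-} = (\theta_{k+2}, \theta_{k+3})$ on the period-$2p$ side, which is a genuine short complementary arc because $k \geq 2$. The key step is to rerun the minimality/unlinking argument of Lemma~\ref{mixed} for $\mathcal{I}^{-}$ in place of $\mathcal{I}^{+}$: since $\mathcal{I}^{-}$ is not the critical value arc of $\mathcal{A}_1$, it is the diffeomorphic image $\mathcal{I}^{-} = (-d)^{m}(\mathcal{I}_{c})$ of some critical value arc $\mathcal{I}_c$, and I would show $\mathcal{I}_c = \mathcal{I}_{\mathcal{P}}$. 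If not, $\mathcal{I}_c$ would strictly contain $\mathcal{I}_{\mathcal{P}}$ while being strictly shorter than $\mathcal{I}^{-}$; an extension of $\mathcal{I}_c$ past $\theta_{k+2}$ is trapped inside $\mathcal{I}^{-}$ and, by unlinking, drags both of its endpoints into $\mathcal{I}^{-}$, forcing $\mathcal{I}_c$ to wrap around the circle (impossible for a short arc), whereas an extension on the other side with right endpoint exactly $\theta_{k+2} \in \mathcal{A}_1$ would make $\mathcal{I}_c$ a \emph{second} critical value arc of $\mathcal{A}_1$, again impossible. Thus $\mathcal{I}^{-} = (-d)^{m}(\mathcal{I}_{\mathcal{P}})$ diffeomorphically.

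Finally I would extract the contradiction from the position of $\mathcal{I}^{-}$. Because both $\mathcal{I}^{-}$ and $\mathcal{I}_{\mathcal{P}}$ are complementary arcs of $\mathcal{A}_1$, the exponent $m$ is a multiple of $p$, and along the whole diffeomorphic orbit no arc is ever a critical arc; so the orbit meets $\mathcal{A}_1$ exactly at the times $0, p, 2p, \dots$, each passage being a diffeomorphic application of $g = (-d)^p$. Since $g$ acts on $\mathcal{A}_1$ by the flip, one such step carries the short complementary arc between $\theta_a$ and $\theta_{a+1}$ to the one between $\theta_{v-a}$ and $\theta_{v-a+1}$, i.e. it sends position $a$ to position $v-a$. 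Iterating from the position $k+1$ of $\mathcal{I}_{\mathcal{P}}$ alternates only between positions $k+1$ and $k$, so $\mathcal{I}^{-}$ would have to occupy position $k$ or $k+1$; but $\mathcal{I}^{-}$ sits at position $k+2$, and $k+2 \notin \lbrace k, k+1\rbrace$. This contradiction forces $v \leq 3$. The main obstacle is the endpoint bookkeeping of the third paragraph — correctly ruling out both the wrap-around and the shared-endpoint configurations for $\mathcal{I}_c$ — since that is precisely where unicriticality (through the uniqueness of the critical value arc of each $\mathcal{A}_j$) and the unlinking property must be combined with care.
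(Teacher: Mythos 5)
There is a genuine gap, and it sits at the very first step on which everything else rests: your claim that, because $\mathcal{A}_1$ lies in an arc of length less than $1/d$ and the first return map $g=(-d)^p$ reverses cyclic order, $g$ must act on $\mathcal{A}_1=\lbrace\theta_1<\cdots<\theta_v\rbrace$ as the specific flip $\theta_i\mapsto\theta_{v+1-i}$. This is a non sequitur. A cyclic-order-reversing involution of a $v$-element cyclically ordered set is a reflection of the $v$-gon, and there are $v$ of them; containment in a short arc selects a linear order but does not select which reflection occurs. Concretely, one application of $-d$ sends the endpoints of the critical arc of $\mathcal{A}_j$ to the endpoints of the critical \emph{value} arc of $\mathcal{A}_{j+1}$, which need not be the critical arc of $\mathcal{A}_{j+1}$ (see Example 4, where the critical value arc of $\mathcal{A}_1$ is $(3/7,4/9)$ while its critical arc is $(4/7,3/7)$), so the induced linear order gets cyclically rotated at each step and the composite reflection is not determined by your reasoning. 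Example 3 of the paper ($\mathcal{A}_1=\lbrace 1/9,8/9\rbrace$, both angles fixed by $(-2)^3$) shows the first return map can even be the identity rather than the endpoint swap. The damage is worst in your even case, which consists entirely of the deduction ``$v$ even $\Rightarrow$ no fixed points of $g$ $\Rightarrow$ no period-$p$ angle, contradicting Lemma~\ref{mixed}'': a reflection of an even $v$-gon can perfectly well have \emph{two} fixed points, one of which could be a characteristic angle, and then Lemma~\ref{mixed} is not contradicted. In the odd case the flip claim is used twice more, to place the characteristic arc at position $k+1$ and to compute the arc orbit $a\mapsto v-a$.

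The good news is that the rest of your machinery is sound and essentially the paper's: your third-paragraph argument that $\mathcal{I}_c=\mathcal{I}_{\mathcal{P}}$ (ruling out both the wrap-around configuration, which would force $\mathcal{I}_c$ to contain the critical arc, and the shared-endpoint configuration, which would produce a second critical value arc of $\mathcal{A}_1$) is correct and works for \emph{any} short complementary arc of $\mathcal{A}_1$ other than the characteristic arc. To repair the proof, drop the flip claim entirely and use only what Lemma~\ref{mixed} actually gives you for $v\geq 3$: writing $\mathcal{I}_{\mathcal{P}}=(b,c)$, the angle $b$ is fixed by $(-d)^p$ and $\lbrace a,c\rbrace$ is a $2$-cycle, so every arc in the diffeomorphic $(-d)^p$-orbit of $\mathcal{I}_{\mathcal{P}}$ has $b$ as an endpoint and the orbit consists of only the two arcs $(b,c)$ and $(a,b)$. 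For $v\geq 4$ there exists a short complementary arc of $\mathcal{A}_1$ which is neither of these (e.g.\ one not having $b$ as an endpoint), yet your minimality argument forces it into that orbit --- contradiction, with no case split on the parity of $v$. This is exactly how the paper argues, via the second and third longest non-characteristic complementary arcs; as written, your proposal does not reach that conclusion.
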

 
\begin{proof}
Let  $\mathcal{O} = \lbrace z_1 , z_2 ,\cdots,z_p\rbrace$ be an orbit of odd period $p$ with associated orbit portrait $\mathcal{P} = \lbrace \mathcal{A}_1 , \mathcal{A}_2, \cdots, \mathcal{A}_p \rbrace$. Suppose more than three periodic rays land at $z_1$ and the characteristic arc $\mathcal{I}_{\mathcal{P}}$ is the critical value arc of $\mathcal{A}_1$. Then $\mathcal{A}_1$ has at least three complementary components other than $\mathcal{I}_{\mathcal{P}}$: $\mathcal{I}_{1}, \mathcal{I}_{2}, \mathcal{I}_{3}$  and we can assume that $\textit{l}\left(\mathcal{I}_{1}\right) \geq \textit{l}\left(\mathcal{I}_{2}\right) \geq \textit{l}\left(\mathcal{I}_{3}\right)$.

Since $\mathcal{I}_{2}$ is not a critical value arc, some critical value arc $\mathcal{I}_c$ must map diffeomorphically onto $\mathcal{I}_{2}$ under some iterate of multiplication by $-d$. Thus $\mathcal{I}_c$ is strictly smaller than $\mathcal{I}_{2}.$ We again claim that $\mathcal{I}_c = \mathcal{I}_{\mathcal{P}}$. If not, then $\mathcal{I}_c$ strictly contains $\mathcal{I}_{\mathcal{P}}.$ At least one end of $\mathcal{I}_c$ must lie in $\mathcal{I}_{2};$ else 
$\mathcal{I}_c$ would contain $\mathcal{I}_{2},$ but $\textit{l}\left(\mathcal{I}_{2}\right) > \textit{l}\left(\mathcal{I}_c\right).$ It follows from the unlinking property that both ends of $\mathcal{I}_c$ lies in $\mathcal{I}_{2}.$ But then $\mathcal{I}_c$ would contain $\mathcal{I}_{1}$, which is impossible as $\textit{l}\left(\mathcal{I}_{1}\right) \geq \textit{l}\left(\mathcal{I}_{2}\right) > \textit{l}\left(\mathcal{I}_{c}\right).$ Therefore, $\mathcal{I}_{2} = \left( -d \right)^{m_1} \mathcal{I}_{\mathcal{P}},$ with $m_1 = pr_1$ for some $r_1 \geq 1$.

Applying the same argument on $\mathcal{I}_{3}$, we have: $\mathcal{I}_{3} = \left( -d \right)^{m_2} \mathcal{I}_{\mathcal{P}},$ with $m_2 = pr_2$ for some $r_2 \geq 1$.

But from the previous lemma, the orbits of the end-points of $\mathcal{I}_{\mathcal{P}}$ under multiplication by $\left( -d \right)^{p}$ consist of only three points: 
the end-points of $\mathcal{I}_{\mathcal{P}}$ itself and those of one of its adjacent arcs. This contradicts the fact that both $\mathcal{I}_{2}$ and $\mathcal{I}_{3}$ are different from $\mathcal{I}_{\mathcal{P}}$ and finishes the proof.
\end{proof}

\begin{remark}
An alternative proof of Lemma \ref{three} follows from \cite[Theorem 5.2]{Ki}: for a unicritical anti-polynomial $f$, the second iterate $f^{\circ 2}$ has exactly $2$ critical values and hence, any periodic orbit portrait for $f^{\circ 2}$ can have at most $3$ cycles of rays. However, Kiwi's theorem does not provide any information about how these dynamical rays are permuted by the anti-polynomial $f$. In particular, the exact periods (under $f$) of the characteristic angles and their relative positions are important for our applications and these are discussed in Lemma \ref{mixed} and Lemma \ref{lengths}.
\end{remark}
We are now in a position to prove the main theorem of this section.

\begin{proof}[Proof of Theorem \ref{complete antiholomorphic}]
Property (\ref{unlinked}) simply states the fact that if two periodic rays $\mathcal{R}_{\theta}^c$ and $\mathcal{R}_{\theta'}^c$ land together at some periodic point $z$, then the rays $\mathcal{R}_{\theta+j/d}^c$ and $\mathcal{R}_{\theta'+j/d}$ land together at a pre-periodic point $z'$ with $f(z')=f(z)$, since the Julia set of $f$ has a $d$-fold rotation symmetry. The rest of the properties follow from the previous lemmas. 
\end{proof}

\section{Formal Orbit Portraits And A Realization Theorem}\label{formal}

\begin{definition}
A finite collection $\mathcal{P} = \lbrace \mathcal{A}_1 , \mathcal{A}_2, \cdots, \mathcal{A}_p \rbrace$ of subsets of $\mathbb{R}/\mathbb{Z}$ satisfying the five properties of Theorem \ref{complete antiholomorphic} is called a \emph{formal orbit portrait}. The property (4) of Theorem \ref{complete antiholomorphic} implies that each $\mathcal{A}_j$ has a complementary arc of length greater than $\left( 1 - 1/d\right)$ (which we call the critical arc of  $\mathcal{A}_j$) that, under multiplication by $-d$ covers exactly one complementary arc of $\mathcal{A}_{j+1}$ d-times (which we call the critical value arc of $\mathcal{A}_{j+1}$) and the others $\left( d - 1\right)$-times. 
\end{definition}

The principal goal of this section is to prove the following realization theorem.

\begin{theorem}\label{main}
Let $\mathcal{P} = \lbrace \mathcal{A}_1 , \mathcal{A}_2, \cdots, \mathcal{A}_p \rbrace$ be a formal orbit portrait. Then there exists some $c \in \mathbb{C} \setminus \mathcal{M}^{\ast}_d$, such that $f(z)= \bar{z}^d+ c$ has a repelling periodic orbit with associated orbit portrait $\mathcal{P}$.
\end{theorem}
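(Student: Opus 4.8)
The plan is to reduce the realization problem to a statement about where the critical value sits in the dynamical plane, and then to exhibit an escaping parameter for which this location forces exactly the prescribed co-landing pattern. First I would read off from $\mathcal{P}$ the data that actually pins down the portrait: by Lemma \ref{blah} the collection has a unique characteristic arc $\mathcal{I}_{\mathcal{P}} = (t^-, t^+)$, the shortest complementary arc of all the $\mathcal{A}_j$, and by property (5) of Theorem \ref{complete antiholomorphic} the characteristic angles $t^\pm$ are periodic under $\theta \mapsto -d\theta$. The guiding principle, as in the holomorphic theory of \cite{M2a}, is that $t^\pm$ determine the whole portrait: once the dynamical rays $\mathcal{R}^c_{t^-}$ and $\mathcal{R}^c_{t^+}$ land together at a repelling periodic point with the critical value caught in the sector they bound, the remaining $\mathcal{A}_j$ and all of their rays are forced by the dynamics.

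Next I would invoke the correspondence between parameter rays of $\mathcal{M}^*_d$ and the dynamical ray carrying the critical value. Using the B\"ottcher uniformization near $\infty$ (\cite[Lemma 1]{Na1}) together with the parameter-space uniformization of Nakane--Schleicher \cite{NS}, a parameter $c \notin \mathcal{M}^*_d$ lies on the parameter ray at angle $\theta$ precisely when the critical value $c$ lies on the dynamical ray $\mathcal{R}^c_\theta$; here one must keep track of the anti-holomorphic twist (the conjugacy is to $\bar z^d$, so the parameter uniformization is only real-analytic and the angle correspondence carries a sign), and I would fix conventions accordingly. I would then choose $c$ in the \emph{sub-wake} outside $\mathcal{M}^*_d$ cut out by the two parameter rays at $t^-$ and $t^+$ --- concretely, on a parameter ray $\mathcal{R}^{\mathcal{M}}_{\theta^*}$ with $\theta^* \in (t^-, t^+)$. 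For such $c$ the critical orbit escapes, so $c \notin \mathcal{M}^*_d$ for free, and the critical value sits on a dynamical ray whose angle lies strictly inside the characteristic arc, that is, inside the prospective critical value sector.

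With $c$ so chosen I would verify the portrait. Since $t^\pm$ are periodic under $\theta \mapsto -d\theta$, the rays $\mathcal{R}^c_{t^\pm}$ are periodic; at an escaping parameter periodic rays land at repelling periodic points as long as they avoid the (pre)critical set, and the placement of the critical value strictly between them prevents any precritical point from separating $t^-$ from $t^+$, so both rays land and, by the separation and unlinking arguments of the Critical Arc and Characteristic Arc lemmas, they must land at a common point $z_1$. I would then recover the orbit $\mathcal{O} = \lbrace z_1, \ldots, z_p \rbrace$ by iterating $f_c$ (applying property (2), so that $\theta \mapsto -d\theta$ carries $\mathcal{A}_j$ onto $\mathcal{A}_{j+1}$), recover the remaining rays of each $\mathcal{A}_j$ by pulling back through the $d$-fold rotational symmetry of $J(f_c)$ exactly as in property (\ref{unlinked}), and check that no extra rays co-land --- this last point again using that the critical value occupies the characteristic value sector and hence blocks any further identifications. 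Finally, because the critical orbit escapes, $f_c$ has no non-repelling cycles, so $z_1$ is automatically repelling.

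The main obstacle is the landing-and-exactness step: proving that the characteristic rays genuinely land (rather than merely accumulate) at an escaping parameter, and that the realized co-landing pattern is \emph{exactly} $\mathcal{P}$ with nothing extra. This is where the combinatorics of the preceding lemmas --- the bound on the number of rays (Lemma \ref{three}), the critical-value-sector structure, and the unlinking property --- must be combined with the dynamical control coming from the B\"ottcher coordinate, and where the anti-holomorphic sign twist in the parameter/dynamical ray dictionary has to be handled with care, so that the characteristic angles of the realized orbit are $t^\pm$ themselves and not their images.
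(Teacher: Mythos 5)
Your overall strategy is the same as the paper's: pick an escaping parameter $c$ whose external angle $t(c)$ lies in the characteristic arc $\left(t^-,t^+\right)$, show that $\mathcal{R}_{t^-}^{c}$ and $\mathcal{R}_{t^+}^{c}$ then co-land at a repelling periodic point (the paper does this via the itinerary argument of Lemma \ref{Outside_The_Multicorns}, using the partition of the Cantor Julia set by the $d$ rays crashing into the critical point), and then argue that the resulting actual portrait equals $\mathcal{P}$. The co-landing half of your argument is essentially right, if under-developed. The problem is the exactness half, which you correctly flag as ``the main obstacle'' but then resolve with a claim that is false: you assert that the critical value occupying the characteristic value sector ``blocks any further identifications.'' In fact the opposite is true --- the critical value sitting in the characteristic sector is precisely the configuration in which \emph{additional} rays can join the cycle. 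The concrete failure is the case where $p$ is odd and each $\mathcal{A}_j$ consists of exactly two angles of period $2p$ (Case 4 of the paper's proof). For such a $\mathcal{P}$ there is a period-$p$ angle $t\in\left(t^-,t^+\right)$, and for $t(c)$ on one side of $t$ the realized portrait is the strictly larger three-ray portrait $\lbrace t^-,t,t^+\rbrace$ of Case 3, not $\mathcal{P}$. No choice of $t(c)$ in a generic position inside $\left(t^-,t^+\right)$ is guaranteed to work; the paper's fix is to take $t(\tilde c)=t$ \emph{exactly}, so that the ray at angle $t$ bifurcates on a precritical point and cannot land, and then to run a perturbation argument combined with Lemma \ref{three} to exclude any further period-$p$ ray. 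Nothing in your proposal produces this idea, and without it the theorem is not proved for that class of portraits.

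There are smaller gaps of the same flavor in the remaining cases, which your ``forced by the dynamics'' step glosses over: you must rule out that the common landing point $z$ has period a \emph{proper} divisor of $p$, and that more rays than those of $\mathcal{A}_1$ land at $z$. For even $p$ the paper does this with Lemma \ref{folklore} plus a counting argument ($r'p'=rp$ forces $p'=p$ and $r'=r$); for odd $p$ it uses the restricted-periods classification (period of any landing ray is $p$ or $2p$, hence $p'=p$) together with the cap of Theorem \ref{complete antiholomorphic} on how many rays of each period can co-land. These are short arguments, but they are where the classification theorem actually gets used, and your proposal does not supply substitutes for them.
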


We borrow many techniques from the proof of the realization of formal orbit portraits for quadratic polynomials given in \cite{M2a}. The anti-holomorphic case is slightly more difficult than the holomorphic one and the proof will be divided into several cases. 

The next lemma is a combinatorial version of Lemma \ref{blah} and this is where condition (\ref{unlinked}) of the definition of formal orbit portraits comes in.

\begin{lemma}\label{sixth condition}
Let $\mathcal{P} = \lbrace \mathcal{A}_1 , \mathcal{A}_2, \cdots, \mathcal{A}_p \rbrace$ be a formal orbit portrait. Among all the complementary arcs of the various $\mathcal{A}_j$'s, there is a unique one of minimum length. It is a critical value arc for some $\mathcal{A}_j $ and is strictly contained in all other critical value arcs.
\end{lemma}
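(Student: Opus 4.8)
The plan is to reproduce the proof of Lemma \ref{blah} verbatim in structure, replacing every appeal to the dynamical plane (rays, sectors, the critical value) by a purely combinatorial argument based on properties (3) and (\ref{unlinked}) of a formal orbit portrait. Since each $\mathcal{A}_j$ is finite, there are only finitely many complementary arcs altogether, so a complementary arc $I=(t^-,t^+)$ of minimal length exists; this is the harmless first step.

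The second step is to show that $I$ must be a critical value arc. Here I would use only the covering bookkeeping packaged into the definition of a formal portrait: the critical arc of $\mathcal{A}_j$ covers the critical value arc of $\mathcal{A}_{j+1}$ exactly $d$ times and every other complementary arc of $\mathcal{A}_{j+1}$ exactly $d-1$ times. The remaining $|\mathcal{A}_j|-1$ complementary arcs of $\mathcal{A}_j$ all have length $<1/d$ (together they fill the complement of the critical arc, of length $<1/d$), so $\theta\mapsto-d\theta$ is injective on each, expanding lengths by the factor $d$ onto a complementary arc of $\mathcal{A}_{j+1}$. Counting coverings forces these images to be precisely the non-critical-value arcs of $\mathcal{A}_{j+1}$, in bijection. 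Hence every non-critical-value complementary arc is the $d$-fold expansion of a strictly shorter arc, so a minimal arc cannot be of this type; therefore $I$ is a critical value arc of some $\mathcal{A}_a$.

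The last step is nesting. By the Unlinking Property (equivalently property (\ref{unlinked})), any two complementary arcs of the $\mathcal{A}_j$'s are either nested or have disjoint interiors. Since $I$ is globally shortest, no arc can lie strictly inside it, so for any other critical value arc $V$ it suffices to exclude the disjoint case and then conclude $I\subsetneq V$. This is exactly where Lemma \ref{blah} invoked the common critical value, and it is the crux of the present lemma. I would supply the combinatorial substitute from property (\ref{unlinked}): writing $g(\theta)=-d\theta$, the full preimage $g^{-1}(\mathcal{A}_{a+1})$ is the disjoint union $\bigcup_{i=0}^{d-1}\left(\mathcal{A}_a+i/d\right)$ of rotational translates, which property (\ref{unlinked}) declares pairwise unlinked, and the defining property of the critical value arc says precisely that $g^{-1}(V_a)$ lies entirely in the single long (critical) gap of $\mathcal{A}_{a-1}$. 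Tracking the $d$ equally spaced preimages of an interior point of a critical value arc and invoking the unlinking of the translates, one forces all the long gaps of the various $\mathcal{A}_i$ to overlap in a region mapping to one common angle $\vartheta$ that then lies in every critical value arc. Thus the critical value arcs pairwise intersect; being nested-or-disjoint, they are pairwise nested, hence totally ordered by inclusion. The unique shortest one is $I$, strictly contained in all the others, which simultaneously yields uniqueness.

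I expect the genuine difficulty to be isolating and justifying this common angle $\vartheta$ purely combinatorially — equivalently, proving that the long gaps have a nonempty common overlap from the unlinking of the $d$ rotational translates. This is exactly the point at which property (\ref{unlinked}), and not merely the weaker pairwise unlinking of the $\mathcal{A}_j$ themselves, becomes indispensable, mirroring the way unicriticality (a \emph{single} critical value) is what makes the characteristic arc unique in Lemma \ref{blah}.
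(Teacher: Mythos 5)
Your opening two steps coincide with the paper's: the minimal complementary arc $\mathcal{I}=(t^-,t^+)$ exists, and it must be a critical value arc because every non-critical-value arc is the diffeomorphic ($d$-fold expanding) image of an arc of $1/d$ times its length. The problem is the third step. You correctly isolate the crux --- showing that $\mathcal{I}$ meets every other critical value arc --- and propose to settle it by exhibiting a common angle $\vartheta$ lying in all critical value arcs, obtained by ``tracking the $d$ equally spaced preimages \dots and invoking the unlinking of the translates.'' But you then explicitly defer exactly this point (``I expect the genuine difficulty to be isolating and justifying this common angle $\vartheta$''). That deferred claim \emph{is} the lemma; as written, the argument is incomplete. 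A secondary issue: your auxiliary assertion that any two complementary arcs of unlinked sets are nested or disjoint is false in general. For $\mathcal{A}_1=\lbrace 1/10, 2/10\rbrace$ and $\mathcal{A}_2=\lbrace 3/10, 4/10\rbrace$ the two long gaps $(2/10,1/10)$ and $(4/10,3/10)$ intersect in two components and neither contains the other. It does hold for the specific pair $(\mathcal{I},V)$, but only because $\mathcal{I}$, being minimal, contains no angle of $\mathcal{P}$, and this deserves to be said.

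For comparison, the paper closes the gap without any common angle $\vartheta$. Say $\mathcal{I}$ is the critical value arc of $\mathcal{A}_j$. Its $d$ preimages $-\mathcal{I}/d+r/d$ ($r=0,\dots,d-1$) contain no angle of $\mathcal{P}$ (by minimality of $\mathcal{I}$) and cut the circle into $d$ arcs $C_1,\dots,C_d$, each of length $(1-\ell(\mathcal{I}))/d<1/d$, arranged so that $\mathcal{A}_{j-1}+i/d\subset C_{i+1}$ with the endpoints of $C_{i+1}$ lying in $\mathcal{A}_{j-1}+i/d$. Property (4) --- unlinking of each $\mathcal{A}_k$ from \emph{all} the translates, which is exactly the strengthening you flag as indispensable --- then traps each $\mathcal{A}_k$ with $k\neq j-1$ in the interior of a single $C_{r_k}$. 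Hence every non-critical arc of $\mathcal{A}_k$ (having length $<1/d$) avoids all $d$ preimages of $\mathcal{I}$, so its image, a non-critical-value arc of $\mathcal{A}_{k+1}$, avoids $\mathcal{I}$. Since $\left[t^-,t^+\right]$ is connected and disjoint from $\mathcal{A}_{k+1}$, it lies in a single complementary arc of $\mathcal{A}_{k+1}$, which by the above must be the critical value arc. This gives the strict containment, and uniqueness, in one stroke. If you want to salvage your route via a common angle, this same $C_i$-decomposition is what you would need to make the overlap of the ``long gaps'' precise; note that length considerations alone cannot do it, since three arcs of length greater than $1/2$ need not share a point.
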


\begin{proof}
Among all the complementary arcs of the various $\mathcal{A}_j$'s, there is clearly at least one, say $\mathcal{I} =  \left(t^-,t^+\right)$, of minimal length $l$. This arc must be a critical value arc of some $\mathcal{A}_j$,  else it would be the diffeomorphic image of some arc of $1/d$ times its length. Let $\mathcal{I^{\prime}} = \left(a,b\right)$ be the critical arc of $\mathcal{A}_{j-1}$ having length $\{(d-1)+l\}/d$ so that its image, under multiplication by $-d$, covers $\left(t^-,t^+\right)$ $d$-times and the rest of the circle exactly $(d-1)$-times. $\left(t^-,t^+\right)$ has $d$ pre-images $-\mathcal{I}/d$, $\left(-\mathcal{I}/d+1/d\right)$, $\left(-\mathcal{I}/d+2/d\right), \cdots, \left(-\mathcal{I}/d+(d-1)/d\right)$ (as shown in Figure \ref{complicated}); each of them is contained in $\left(a,b\right)$ and has length $l/d$. By our minimality assumption, $\left(t^-,t^+\right)$ contains no angle of $\mathcal{P}$ and hence neither do its d pre-images. Label the d connected components of $\displaystyle \mathbb{R}/\mathbb{Z} \setminus \bigcup_{r=0}^{d-1} \left(-\mathcal{I}/d+r/d\right)$ as $C_1, C_2,\cdots, C_d$ with $C_1 = \lbrack b,a \rbrack$.

Clearly, $\mathcal{A}_{j-1}$ is contained in $C_1$ and the two end-points $a$ and $b$ of $C_1$ belong to $\mathcal{A}_{j-1}$. Also, $C_{i+1} = C_1 + i/d$ for $0\leq i \leq d-1$. Therefore, $\mathcal{A}_{j-1}+i/d$ is contained in $C_{i+1}$ with the end-points of $C_{i+1}$ belonging to $\mathcal{A}_{j-1}+i/d$. By condition (\ref{unlinked}) of the definition of formal orbit portraits, each $\mathcal{A}_{j-1}+i/d$ (for fixed $j$ and varying $i$) is unlinked from $\mathcal{A}_{k}$, for $k\neq j-1$. This implies that for any $k \neq j-1$, $\exists !$ $r_k \in \lbrace 1, 2, \cdots, d \rbrace$ such that $\mathcal{A}_k$  is contained in int$ (C_{r_k})$. Hence, all the non-critical arcs of $\mathcal{A}_k$ would be contained in the interior of $C_{r_k}$. Thus all the non-critical value arcs of $\mathcal{A}_{k+1}$ $(k+1 \neq j)$ are contained $\displaystyle \mathbb{R}/\mathbb{Z} \setminus \left[t^-,t^+\right]$. Hence the critical value arc of any $\mathcal{A}_{m}$ $(m \neq j)$ strictly contains $\mathcal{I} =  \left(t^-,t^+\right)$. The uniqueness follows.
\end{proof}

We will also need an anti-holomorphic analogue of a classical result about unicritical polynomials (see \cite[Lemma 2.7]{M2a}, \cite[Lemma 2.4]{S1a}).

\begin{lemma}\label{folklore}
For a formal anti-holomorphic orbit portrait $\displaystyle$ $\mathcal{P} = \lbrace \mathcal{A}_1 , \mathcal{A}_2, \cdots,$ $\mathcal{A}_{p} \rbrace$ with even $p$, multiplication by $-d$ either permutes all the angles of $\mathcal{P}$ transitively or $\vert \mathcal{A}_j \vert = 2 \hspace{1mm} \forall \hspace{1mm} j$ and the first return map of $\mathcal{A}_j$ fixes each angle.
\end{lemma}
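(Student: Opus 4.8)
The plan is to reduce the statement to the cycle structure of a single rotation, and then to pin that structure down using the minimality of the characteristic arc. Since $p$ is even, the first return map $\rho(\theta) := (-d)^p\theta = d^p\theta$ is orientation preserving, and by property (2) of Theorem \ref{complete antiholomorphic} it carries $\mathcal{A}_1$ bijectively onto itself while preserving the cyclic order of its $v := \vert \mathcal{A}_1\vert$ elements. Hence $\rho$ acts on the cyclically ordered set $\mathcal{A}_1$ as a rigid rotation by some amount $s$, and it likewise rotates the $v$ complementary gaps of $\mathcal{A}_1$ by $s$. By the defining property (5)(a), every angle of $\mathcal{P}$ has the same period $rp$ under multiplication by $-d$, so $\rho$ has order $r$; a rotation by $s$ on $v$ points has exactly $g := \gcd(v,s)$ cycles, each of length $r = v/g$. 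Since multiplication by $-d$ sends $\mathcal{A}_j$ onto $\mathcal{A}_{j+1}$, the grand orbit decomposition of all $pv$ angles is governed entirely by $\rho$, and the action on $\mathcal{P}$ is transitive precisely when $g=1$. Thus it suffices to prove: if $g>1$, then $v=2$ and $\rho=\mathrm{id}$.

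The engine of the argument is the length behavior of the gaps under multiplication by $-d$, together with the uniqueness furnished by Lemma \ref{sixth condition}. Normalize so that the characteristic arc $\mathcal{I}_{\mathcal{P}}$, the unique strictly shortest complementary arc among all the $\mathcal{A}_j$, is the critical value arc of $\mathcal{A}_1$, with endpoints the two characteristic angles. Recall that a non-critical gap of $\mathcal{A}_j$ maps diffeomorphically onto a gap of $\mathcal{A}_{j+1}$, multiplying its length by $d$, whereas the unique critical arc of $\mathcal{A}_j$ is wrapped $d$-to-one over the critical value arc of $\mathcal{A}_{j+1}$. Consequently, if one follows a gap forward, its length is multiplied by $d$ at every step until the gap coincides with a critical arc; at that instant its two endpoints are carried to the endpoints of the critical value arc one level up. I would use this to define a self-map $\sigma$ of the finite set of $p$ critical value arcs $\{V_1,\dots,V_p\}$ (flow a critical value arc forward under multiplication by $-d$ until it first becomes a critical arc, then record the critical value arc it re-emerges as), and then show that the cycles of $\sigma$ are in bijection with the grand orbits of $\mathcal{P}$; in particular $g=1$ if and only if $\sigma$ is a single $p$-cycle.

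With this in place I would analyze the $\rho$-orbit of the globally shortest gap $V_1 = \mathcal{I}_{\mathcal{P}}$. On every diffeomorphic (non-critical) step the length strictly increases by the factor $d$, so a finite closed orbit cannot consist of diffeomorphic steps alone; tracking the emergences and using that $V_1$ is the \emph{strict} minimum (Lemma \ref{sixth condition}), I expect to force the first critical arc met by the forward flow of $V_1$ to be the critical arc of $\mathcal{A}_p$ and the arc to re-emerge exactly as $V_1$. This makes $\rho$ fix the gap $V_1$; since $V_1$ is the unique shortest gap and $\rho$ is a rotation, this is possible only if $s\equiv 0$, i.e. $\rho=\mathrm{id}$ and $r=1$. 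It then remains to deduce $v=2$: if $v\geq 3$ there are at least two non-critical gaps of $\mathcal{A}_1$, and running the same forward-flow analysis on the \emph{second} shortest gap should exhibit another complementary arc that is a strict minimum for its own return, producing a second candidate for the global minimum and contradicting the uniqueness in Lemma \ref{sixth condition}.

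The main obstacle is the combinatorial bookkeeping underlying the last two paragraphs: making rigorous the correspondence between the cycles of $\sigma$ and the grand orbits, and controlling precisely where and how often an iterated gap folds onto the single critical arc, including the clean deduction that $\rho=\mathrm{id}$ forces $v=2$ rather than merely $v=2$ being one possibility. This is exactly the delicate part of Milnor's and Schleicher's treatment of the holomorphic unicritical case (\cite[Lemma 2.7]{M2a}, \cite[Lemma 2.4]{S1a}); the only anti-holomorphic feature here is that the individual steps $\theta\mapsto -d\theta$ reverse orientation, but the hypothesis that $p$ is even makes the first return map $\rho$ orientation preserving, so their argument should transfer essentially verbatim once the reduction of the first paragraph has been carried out.
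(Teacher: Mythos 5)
Your opening reduction is sound and matches the paper's starting point: since $p$ is even, the first return map $\rho=d^{p}$ preserves the cyclic order of $\mathcal{A}_1$, hence acts as a combinatorial rotation, and the lemma amounts to showing that $v=\vert\mathcal{A}_1\vert\geq 3$ forces this rotation to be by one step (equivalently transitive). But the engine you propose for that step does not work as stated, and the part you defer to ``combinatorial bookkeeping'' is precisely the content of the lemma. Concretely, your third paragraph claims that minimality of the characteristic arc forces the forward flow of $V_1=\mathcal{I}_{\mathcal{P}}$ to re-emerge from its first critical fold as $V_1$ itself, whence $\rho$ fixes the gap $V_1$ and $s\equiv 0$. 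This is false in general: for the portrait $\lbrace\lbrace 1/5,4/5\rbrace,\lbrace 2/5,3/5\rbrace\rbrace$ of $\bar z^{2}-5/4$ (the paper's first example, with $p=2$ even and the angles permuted transitively), the characteristic arc is $(2/5,3/5)$, and its forward flow $(2/5,3/5)\mapsto(4/5,1/5)\mapsto(3/5,2/5)$ hits the critical arc of $\mathcal{A}_2$ and folds onto the \emph{other} critical value arc $(1/5,4/5)$, not back onto $(2/5,3/5)$; correspondingly $\rho$ moves the gap $(2/5,3/5)$ to $(3/5,2/5)$. So your key claim holds only in the non-transitive case, i.e.\ only under the hypothesis $g>1$ that you are trying to exploit, and you never say where that hypothesis enters the length analysis. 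The closing step is similarly unsupported: a ``strict minimum for its own return'' of a second gap does not contradict the \emph{global} uniqueness of Lemma \ref{sixth condition}.

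For comparison, the paper's proof avoids the map $\sigma$ entirely and uses $v\geq 3$ directly: it takes the two complementary arcs $\mathcal{I}^{\pm}$ of $\mathcal{A}_1$ adjacent to $\mathcal{I}_{\mathcal{P}}$ (these are distinct and non-critical-value because $v\geq 3$), orders them so that $\ell(\mathcal{I}^{-})\geq\ell(\mathcal{I}^{+})$, and shows via the unlinking/nesting argument of Lemma \ref{mixed} that the critical value arc mapping diffeomorphically onto $\mathcal{I}^{+}$ can only be $\mathcal{I}_{\mathcal{P}}$ itself. Writing $\mathcal{I}^{+}=(a,b)$ and $\mathcal{I}_{\mathcal{P}}=(b,c)$, the orientation-preserving map $d^{m}$ (with $m$ a multiple of the even number $p$) then sends $c\mapsto b$ and $b\mapsto a$, i.e.\ it moves each point of $\mathcal{A}_1$ to an adjacent one, so the rotation is by one step and the action is transitive. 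If you want to salvage your approach, you would need to replace the claim ``$\sigma(V_1)=V_1$'' by this argument, which is where the real work lies; as it stands the proposal omits it.
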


\begin{proof}
We assume that the cardinality of each $\mathcal{A}_j$ is at least three and we'll show that multiplication by $-d$ permutes all the angles of $\mathcal{P}$. We can also assume that the characteristic arc $\mathcal{I}_{\mathcal{P}}$ is a critical value arc of $\mathcal{A}_1$. Since $\vert \mathcal{A}_1 \vert \geq 3,$ $\mathcal{A}_1$ has at least three complementary components. Let $\mathcal{I}^{+}$ be the arc just to the right of $\mathcal{I}_{\mathcal{P}}$ and $\mathcal{I}^{-}$ be the one just to the left of $\mathcal{I}_{\mathcal{P}}$. Let $\mathcal{I}^{-}$ be longer than $\mathcal{I}^{+}$; i.e. $\textit{l}\left(\mathcal{I}^{-}\right) \geq \textit{l}\left(\mathcal{I}^{+}\right).$ Since $\mathcal{I}^{+}$ is not the critical value arc of $\mathcal{A}_1$, there must exist a critical value arc $\mathcal{I}_c$ which maps diffeomorphically onto $\mathcal{I}^{+}$ under some iterate of multiplication by $-d$; i.e. $\mathcal{I}^{+} = \left( -d \right)^m \mathcal{I}_c$, for some $m \geq 1$. Arguing as in Lemma \ref{mixed}, we see that $\mathcal{I}_c = \mathcal{I}_{\mathcal{P}}$. 

Therefore, $\mathcal{I}^{+} = \left( -d \right)^m \mathcal{I}_{\mathcal{P}}$. Note that $m$ must be multiple of $p$, thus $m$ is even. Also let, $\mathcal{I}^{+} = \left( a , b \right)$ and $\mathcal{I}_{\mathcal{P}} = \left( b , c \right)$. Since $\mathcal{I}_{\mathcal{P}}$ maps to $\mathcal{I}^{+}$ by an orientation preserving diffeomorphism, we have: $b = d^{m} c$ and $a = d^{m} b$. Multiplication by $d^m$ is an orientation preserving map and it sends $\mathcal{A}_1$ bijectively onto itself such that the point $b$ is mapped to an adjacent point $a$. It follows that multiplication by $d^m$ acts transitively on $\mathcal{A}_1$. Hence multiplication by $-d$ permutes all the angles of $\mathcal{P}$ transitively.
\end{proof}

The next lemma is the key to the proof of the realization theorem and gives a necessary condition for the dynamical rays (of a unicritical anti-polynomial) at the characteristic angles of a formal orbit portrait to land at a common point.

\begin{lemma}[Outside The Multicorns]\label{Outside_The_Multicorns}
Let $\mathcal{P}$ be a formal orbit portrait and $\left(t^-,t^+\right)$ be its characteristic arc. For some  $c \notin \mathcal{M}_d^*$, the two dynamical rays $\mathcal{R}_{t^-}^{c}$ and $\mathcal{R}_{t^+}^{c}$ land at the same point of $J(f_c)$ if the external angle $t(c) \in \left(t^-,t^+\right)$. 
\end{lemma}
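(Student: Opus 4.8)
The plan is to combine the combinatorial rigidity of the characteristic arc (Lemma \ref{sixth condition}) with the landing theorem for periodic rays and a pullback argument that uses the hypothesis $t(c) \in (t^-, t^+)$ to trap the critical value away from the region to be pulled back.

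\emph{Step 1 (the rays land at repelling points).} Since $\mathcal{P}$ is a formal orbit portrait, each of its angles --- in particular the characteristic angles $t^{\pm}$ --- is periodic under $\theta \mapsto -d\theta$; hence $\mathcal{R}_{t^-}^{c}$ and $\mathcal{R}_{t^+}^{c}$ are periodic dynamical rays. Because $c \notin \mathcal{M}_d^*$, the critical orbit of $f_c$ escapes, so $f_c$ carries no non-repelling cycle; by the landing theorem for periodic rays (cf. \cite[Lemma 2.3]{M2a}) the two rays land, and their landing points $w^-, w^+$ are repelling periodic points. Let $N$ be a common period of $t^-$ and $t^+$ under multiplication by $-d$ (the least common multiple of their periods, which exists by property (5) of Theorem \ref{complete antiholomorphic}, and which may differ from $p$ when $p$ is odd by Lemma \ref{mixed}); replacing $N$ by $2N$ if necessary, we may assume $g := f_c^{\circ N}$ is holomorphic. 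Then $g$ fixes each of the two rays and each of $w^{\pm}$, and the goal is to prove $w^- = w^+$.

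\emph{Step 2 (locating the critical value).} By Lemma \ref{blah} and Lemma \ref{sixth condition}, the characteristic arc $\mathcal{I}_{\mathcal{P}} = (t^-, t^+)$ is a critical value arc and is the unique complementary arc of minimal length, strictly contained in every other critical value arc. The rays $\mathcal{R}_{t^{\pm}}^{c}$, together with $w^{\pm}$, bound the open critical value sector $V$ spanned by $(t^-, t^+)$, and the hypothesis $t(c) \in (t^-, t^+)$ says precisely that the critical value $c$ lies in $V$. Consequently the complementary sector $V^{*}$, spanned by the long arc $(t^+, t^-)$, omits the critical value $c$.

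\emph{Step 3 (pullback and nesting).} Suppose, for contradiction, that $w^- \neq w^+$; then the two rays do not bound a region by themselves, and the boundary of $V$ contains a nondegenerate arc $\gamma \subset J(f_c)$ joining $w^-$ to $w^+$. On $V^{*}$ I would consider the branch $h$ of $g^{-1}$ fixing the two boundary rays and the two points $w^{\pm}$. Since the obstruction one must avoid is the critical value $c \in V$, this branch is univalent on $V^{*}$, and because $w^{\pm}$ are repelling fixed points of $g$, it is a contraction near each of them. Iterating yields a nested sequence of sectors $V^{*} \supset h(V^{*}) \supset h^{2}(V^{*}) \supset \cdots$ with the same two boundary rays, along which the Julia arc $\gamma$ is contracted to a point; this forces $w^- = w^+$, a contradiction, so the two rays land together.

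The main obstacle is precisely the univalence and shrinkage asserted in Step 3. The critical values of $g = f_c^{\circ N}$ are the first $N$ points of the critical orbit $c, f_c(c), \dots, f_c^{\circ N}(0)$; the hypothesis controls only the first of these, whereas in the escaping regime the later ones run off to infinity and need not lie outside $V^{*}$. Thus the genuinely technical point is to arrange a univalent inverse branch along the two characteristic rays --- for instance by truncating at a suitable equipotential below the critical potential, where the escaping critical orbit is controlled --- and to verify that the resulting nested sectors shrink to the two boundary rays. This is the anti-holomorphic analogue of Milnor's ray-pair pullback, and combined with the bookkeeping forced by the possibly unequal periods of $t^-$ and $t^+$ when $p$ is odd (Lemma \ref{mixed}), it is where the real work lies.
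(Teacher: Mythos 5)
There is a genuine gap, and it is fatal to the approach rather than a missing technicality. For $c \notin \mathcal{M}_d^*$ the critical orbit escapes and $J(f_c)$ is a Cantor set, i.e.\ totally disconnected. Your Step 3 begins by producing ``a nondegenerate arc $\gamma \subset J(f_c)$ joining $w^-$ to $w^+$'' on the boundary of the sector $V$; no such arc can exist in a totally disconnected Julia set. Worse, if $w^- \neq w^+$ then the union of the two closed rays is a single arc running from $w^-$ through $\infty$ to $w^+$, whose complement in the sphere is connected and simply connected: the regions $V$ and $V^{*}$ of Step 2 do not exist, so the statement ``$t(c) \in (t^-,t^+)$ says precisely that $c \in V$'' has no meaning before the conclusion is already known. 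The ray-pair pullback you are imitating is designed for connected (indeed locally connected) Julia sets; it cannot even be set up in the escape locus, which is exactly the regime of this lemma. You also flag, correctly, that the univalence of the inverse branch of $f_c^{\circ N}$ is unresolved because the later critical values escape to infinity and are not controlled by the hypothesis --- but this is a second, independent obstruction, not the only one.

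The paper's proof is adapted to the Cantor situation and runs through symbolic dynamics instead. Since the critical point escapes, the $d$ dynamical rays at the angles $-t(c)/d + j/d$ all crash into the critical point $0$ and cut the plane into $d$ sectors $L'_0,\dots,L'_{d-1}$; correspondingly the $d$ preimages of $t(c)$ cut $\mathbb{R}/\mathbb{Z}$ into arcs $L_0,\dots,L_{d-1}$. Every angle not in the backward orbit of $t(c)$ gets a $t(c)$-itinerary, every Julia point gets a symbol sequence, and $\mathcal{R}_\theta$ lands at $z$ if and only if the two sequences agree. The combinatorial heart is then Lemma~\ref{sixth condition} (which is where property~(4) of Theorem~\ref{complete antiholomorphic} enters): when $t(c)$ lies in the characteristic arc, its $d$ preimages lie in the $d$ preimage arcs of $(t^-,t^+)$, which contain no angle of $\mathcal{P}$, and each $\mathcal{A}_j$ sits inside a single complementary component $C_{r}$. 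Hence $(-d)^n t^-$ and $(-d)^n t^+$ lie in the same $L_i$ for every $n$, the itineraries of $t^-$ and $t^+$ coincide, and the two rays land at the same point. If you want to salvage your write-up, you should discard the sector pullback entirely and build the itinerary partition from the rays crashing into the critical point; the hypothesis $t(c)\in(t^-,t^+)$ is used only to place the partition points inside the preimage arcs of the characteristic arc, not to locate the critical value in a sector.
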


\begin{proof}
We retain the terminology of Lemma \ref{sixth condition}. If $c \notin \mathcal{M}_d^*$, all the periodic points of $f_c = \bar{z}^d + c$ are repelling and the Julia set is a cantor set.

Let the external angle of $c$ in the parameter plane be $t(c)$. Label the connected components of $\mathbb{R}/\mathbb{Z}\setminus \lbrace -t(c)/d , -t(c)/d + 1/d, \cdots, -t(c)/d + (d-1)/d\rbrace$ counter-clockwise as $L_0, L_1, \cdots, L_{d-1}$ such that the component containing the angle $0$ gets label $L_0$. The $t(c)$-itinerary of an angle $\theta \in \mathbb{R}/\mathbb{Z}$ is defined as a sequence $\left(a_n\right)_{n \geq 0}$ in $\lbrace 0 , 1, \cdots, d-1 \rbrace^{\mathbb{N}}$ such that $a_n = i$ if $(-d)^n\theta \in L_i$. All but countably many $\theta$'s (the ones which are not the iterated pre-images of $t(c)$ under multiplication by $-d$) have a well-defined $t(c)$-itinerary. 

\begin{figure}[ht]
\centering
\includegraphics[scale=0.32]{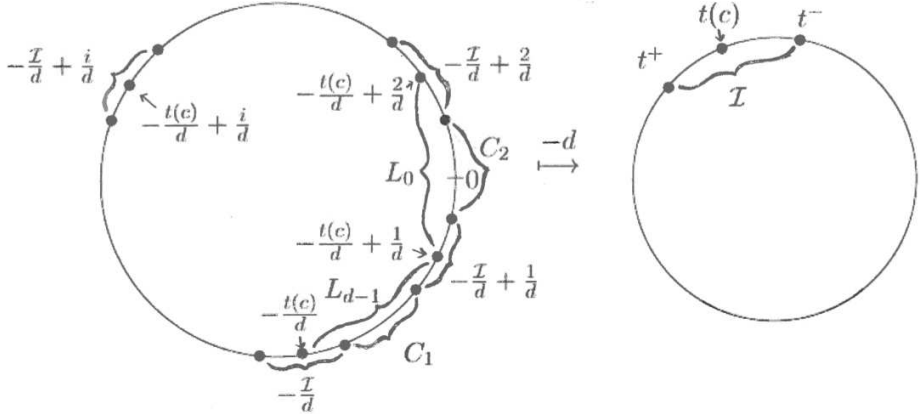}
\caption{The $d$ pre-images of $\mathcal{I} = \left( t^- , t^+ \right)$ under multiplication by $-d$ and the complementary arcs $C_i$'s are labelled on the circle. Also the $d$ pre-images of $t(c) \left(\in \left( t^- , t^+ \right)\right)$ are marked and the components $L_i$'s of $\mathbb{R}/\mathbb{Z}\setminus \lbrace -t(c)/d , -t(c)/d + 1/d, \cdots, -t(c)/d + (d-1)/d\rbrace$ are labelled. Each $C_i$ is contained in some $L_j$.}
\label{complicated}
\end{figure}

Similarly, in the dynamical plane of $f_c$, the $d$ dynamical rays $\mathcal{R}_{-t(c)/d}^{f_c}$, $\mathcal{R}_{-t(c)/d + 1/d}^{f_c},$ $\cdots$, $\mathcal{R}_{-t(c)/d + (d-1)/d}^{f_c}$ land at the critical point $0$ and cut the dynamical plane into $d$ sectors. Label these sectors counter-clockwise as $L^{\prime}_0, L^{\prime}_1, \cdots, L^{\prime}_{d-1}$ such that the component containing the dynamical ray $\mathcal{R}_{0}^{f_c}$ at angle $0$ gets label $L^{\prime}_0$. Any point $z \in J(f_c)$ has an associated symbol sequence $\left(a_n\right)_{n \geq 0}$ in $\lbrace 0 , 1, \cdots, d-1 \rbrace^{\mathbb{N}}$ such that $a_n = i$ if $f_c^{\circ n}(z) \in L^{\prime}_i$. Clearly, a dynamical ray $\mathcal{R}_{\theta}^{f_c}$ at angle $\theta$ lands at $z$ iff the $t(c)$-itinerary of $\theta$ coincides with the symbol sequence of $z$ defined above.

If $t(c) \in \mathcal{I} = \left(t^-,t^+\right)$, the $d$ angles $\lbrace-t(c)/d , -t(c)/d + 1/d, \cdots, -t(c)/d + (d-1)/d\rbrace$ lie in the $d$ intervals $-\mathcal{I}/d$, $\left(-\mathcal{I}/d+1/d\right), \cdots, \left(-\mathcal{I}/d+(d-1)/d\right)$ respectively and no element of $\mathcal{P}$ belongs to $\displaystyle \bigcup_{j=0}^{d-1} \left(-\mathcal{I}/d+j/d\right)$. First note that the rays $\mathcal{R}_{t^-}^{f_c}$ and $\mathcal{R}_{t^+}^{f_c}$ indeed land as $t(c) \notin$ the finite sets $\lbrace  t^{\pm}, -dt^{\pm}, (-d)^2t^{\pm},\cdots \rbrace$. Each $\mathcal{A}_j$ is contained in a unique $C_r.$ Therefore, for each $n\geq 0$, the angles $(-d)^n t^-$ and $(-d)^n t^+$ belong to the same $L_i$. So $t^-$ and $t^+$ have the same $t(c)$-itinerary; which implies that the two dynamical rays $\mathcal{R}_{t^-}^{f_c}$ and $\mathcal{R}_{t^+}^{f_c}$ land at the same point of $J(f_c).$
\end{proof}

In the next lemma, we take a closer look at orbit portraits associated with odd-periodic cycles such that exactly three rays land at each point of the cycle. It follows from Lemma \ref{mixed} that two of these angles must have period $2p$ and the other one has period $p$ such that the two characteristic angles have different periods.   

\begin{lemma}\label{lengths}
Let $\mathcal{P} = \lbrace \mathcal{A}_1, \mathcal{A}_2, \cdots, \mathcal{A}_p \rbrace$ be a formal orbit portrait with $p$ odd and $\vert \mathcal{A}_1 \vert =3$. Assume further that $ \mathcal{A}_1 = \lbrace t^-, t, t^+ \rbrace $ with $t^-$ and $t^+$ of period $2k$ and $t$ of period $k$ such that the characteristic angles are $\lbrace t^-, t \rbrace$. Then,
\begin{enumerate}
\item $t$ belongs to the shorter complementary component of $\mathbb{R}/\mathbb{Z} \setminus \lbrace t^-, t ^+\rbrace$,

\item $\ell$(the shorter complementary component of $\mathbb{R}/\mathbb{Z} \setminus \lbrace t^-, t ^+\rbrace$) = $\left( 1+d^p\right) \cdot \ell$(the characteristic arc of $\mathcal{P}$).
\end{enumerate}
\end{lemma}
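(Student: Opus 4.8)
The plan is to combine the period structure of the three angles, already laid out in Lemma \ref{mixed}, with the fact that the first return map expands lengths by the factor $d^p$. Note first that $k=p$: the angle $t$ lands on an orbit point of period $p$, so by Lemma \ref{Restricted periods} its period is $p$ or $2p$, and being the smaller of the two characteristic periods it must be $p$, while $t^{\pm}$ have period $2p$. Writing the three angles in cyclic order, unicriticality (via the Critical Arc Lemma) forces $\mathcal{A}_1$ into an arc of length $<1/d$, so the period-$p$ angle $t$ sits between $t^-$ and $t^+$, the two short complementary arcs being the characteristic arc $\mathcal{I}_{\mathcal{P}}=(t,t^-)$ and $\mathcal{I}^+=(t^+,t)$, while the remaining arc $\mathcal{I}^-=(t^-,t^+)$ is the critical arc of $\mathcal{A}_1$, of length $>1-1/d$.

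The crux is to prove $\ell(\mathcal{I}^+)=d^p\,\ell(\mathcal{I}_{\mathcal{P}})$. Set $g:=(-d)^p$. By Lemma \ref{mixed}, $g$ fixes $t$ and interchanges $t^-$ and $t^+$, so $g$ sends the endpoints of $\mathcal{I}_{\mathcal{P}}$ to $t$ and $t^+$. Moreover, the proof of Lemma \ref{mixed} realizes $\mathcal{I}^+$ as a diffeomorphic forward image $(-d)^{pq}\mathcal{I}_{\mathcal{P}}$; in particular the forward orbit of $\mathcal{I}_{\mathcal{P}}$ under $\theta\mapsto-d\theta$ is fold free for at least $p$ steps and meets no critical arc. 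Hence each intermediate arc is again a complementary arc of the relevant $\mathcal{A}_j$, and so $g\,\mathcal{I}_{\mathcal{P}}$ is a complementary arc of $\mathcal{A}_1$ of length $d^p\ell(\mathcal{I}_{\mathcal{P}})$ whose endpoints are $t$ and $t^+$. The only complementary arc of $\mathcal{A}_1$ with these endpoints is $\mathcal{I}^+$ (the long arc from $t$ to $t^+$ contains $t^-$), whence $g\,\mathcal{I}_{\mathcal{P}}=\mathcal{I}^+$ and the length identity follows.

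Granting this, both assertions drop out. The component of $\mathbb{R}/\mathbb{Z}\setminus\{t^-,t^+\}$ containing $t$ is exactly $\mathcal{I}^+\cup\{t\}\cup\mathcal{I}_{\mathcal{P}}$, of length $\ell(\mathcal{I}^+)+\ell(\mathcal{I}_{\mathcal{P}})=(1+d^p)\,\ell(\mathcal{I}_{\mathcal{P}})$. This arc is contained in the minimal arc spanning $\mathcal{A}_1$ and so has length $<1/d\le 1/2$; its complement $\mathcal{I}^-$ therefore has length $>1/2$, which proves that the $t$-component is the shorter one, giving (1), and that its length is $(1+d^p)$ times the length of the characteristic arc, giving (2).

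The step I expect to require the most care is the passage from the diffeomorphism of Lemma \ref{mixed} to the equality $g\,\mathcal{I}_{\mathcal{P}}=\mathcal{I}^+$, i.e.\ ruling out any wrapping of $\mathcal{I}_{\mathcal{P}}$ over the first $p$ iterates and confirming that the step-$p$ image has no portrait angle in its interior. I plan to obtain this from the injectivity of the fold-free orbit already guaranteed by Lemma \ref{mixed}, so that the image inherits the property of being a complementary arc; everything else is a short length computation.
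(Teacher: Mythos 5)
Your proof is essentially the paper's: the paper likewise runs the pull-back argument of Lemmas \ref{mixed} and \ref{three} to show that the non-characteristic short arc of $\mathcal{A}_1$ is the diffeomorphic image $(-d)^{pq}\mathcal{I}_{\mathcal{P}}$, reads off its endpoints $t$ and $t^+$ from the periods (forcing $q=1$), and then deduces both claims exactly as you do in your final paragraph. One caution about the write-up: the first-paragraph claim that the Critical Arc Lemma already ``forces'' $t$ to sit between $t^-$ and $t^+$ is a non sequitur --- containment of $\mathcal{A}_1$ in an arc of length $<1/d$ says nothing about which of the three angles is the middle one, and that is precisely assertion (1); likewise the parenthetical ``the long arc from $t$ to $t^+$ contains $t^-$'' presupposes the configuration you are trying to establish. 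The configuration is only pinned down by the dynamical step you perform afterwards: the proof of Lemma \ref{mixed} identifies the \emph{shorter} arc adjacent to $\mathcal{I}_{\mathcal{P}}$ --- hence a non-critical arc --- as $(-d)^{pq}\mathcal{I}_{\mathcal{P}}$, whose endpoints are $t$ and $t^+$ because $(-d)^{pq}$ fixes $t$ and sends $t^-$ to $t^+$ for $q$ odd; since the two short arcs of $\mathcal{A}_1$ share the middle angle, that middle angle is $t$. With the logical order rearranged so that (1) is a consequence of this identification rather than an a priori assumption, your argument is complete and coincides with the paper's.
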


\begin{proof}
Let $\mathcal{I}_\mathcal{P}$ be the characteristic arc of $\mathcal{P}$ with end-points $t^-$, $t$ and the other two complementary components of $\mathcal{A}_1$ be $\mathcal{I}_1$ and $\mathcal{I}_2$. Without loss of generality, we can assume that $\mathcal{I}_2$ is a critical arc. So, $\ell \left( \mathcal{I}_2 \right) > \ell \left( \mathcal{I}_1 \right) > \ell \left( \mathcal{I}_\mathcal{P} \right)$. An argument similar to that in Lemma \ref{three} shows that $\mathcal{I}_\mathcal{P}$ must map diffeomorphically onto $\mathcal{I}_{1}$ under some iterate of multiplication by $-d$; i.e. $\mathcal{I}_{1} = (-d)^{pq} \mathcal{I}_\mathcal{P}$, for some $q \in \mathbb{N}$. Since one end-point (namely, $t$) of $\mathcal{I}_\mathcal{P}$ has period $p$, it follows that $t$ is an end-point of $\mathcal{I}_{1}$ as well and $q=1$. Hence, the end-points of the critical arc $\mathcal{I}_2$ are $t^-$ and $t^+$. This proves that $t$ belongs to the shorter complementary component of $\mathbb{R}/\mathbb{Z} \setminus \lbrace t^-, t ^+\rbrace$ and $\ell \left( \mathcal{I}_{1} \right) = d^p \cdot \ell \left( \mathcal{I}_\mathcal{P} \right)$. Since the shorter complementary component of $\mathbb{R}/\mathbb{Z} \setminus \lbrace t^-, t ^+\rbrace = \mathcal{I}_\mathcal{P} \cup \mathcal{I}_{1}$, part (ii) follows.
\end{proof}

\begin{proof}[Proof Of Theorem \ref{main}]
Let $\mathcal{P}$ be a formal anti-holomorphic orbit portrait. We consider the following cases.
\begin{flushleft}
\underline{Case 1. $p$ is even} 
\end{flushleft}
First let's assume that $\vert \mathcal{A}_j \vert = 2$ and the first return map (multiplication by $(-d)^p = d^p$) of $\mathcal{A}_j$ fixes each angle. Let $\mathcal{I}_{\mathcal{P}}=\left(t^-, t^+\right)$ be the characteristic arc of the formal orbit portrait $\mathcal{P} = \lbrace \mathcal{A}_1 , \mathcal{A}_2, \cdots, \mathcal{A}_{p} \rbrace$ such that $\lbrace t^-, t^+ \rbrace \subset \mathcal{A}_1$. Choose $c$ outside $\mathcal{M}_d^*$ with $t(c) \in \left(t^-,t^+\right)$. Then, the two dynamical rays $\mathcal{R}_{t^-}^{f_c}$ and $\mathcal{R}_{t^+}^{f_c}$ land at the same point $z \in J\left(f_c\right)$. Clearly, $z$ is a periodic point of period $p^{\prime} \vert p$. Let, $\mathcal{P}^{\prime} = \lbrace \mathcal{A}_1^{\prime} , \mathcal{A}_2^{\prime}, \cdots, \mathcal{A}_{p^{\prime}}^{\prime} \rbrace$ be the orbit portrait associated with $\mathcal{O}(z)$ such that $\mathcal{A}_1^{\prime}$ is the set of angles of the rays landing at $z$. Since the two elements $t^-$ and $t^+$ of $\mathcal{A}_1^{\prime}$ are not in the same cycle under multiplication by $-d$, Lemma \ref{folklore} implies that exactly two rays land at $z$ and $ p = p^{\prime}$. Therefore, $\mathcal{P}^{\prime} = \mathcal{P}$.

On the other hand, if multiplication by $-d $ acts transitively on $\mathcal{P}$, there exists $l \in \mathbb{N}$ such that $d^{lp} t^- = t^+$.  Let $r^{\prime}$ rays land at each point of $\mathcal{O}(z)$ as above ($z$ is the common landing point of $\mathcal{R}_{t^-}^{f_c}$ and $\mathcal{R}_{t^+}^{f_c}$, where $c$ lies outside $\mathcal{M}_d^*$ with $t(c) \in \left(t^-,t^+\right)$) so that the associated orbit portrait $\mathcal{P}^{\prime}$ contains a total of $r^{\prime}p^{\prime}$ rays, where $p^{\prime}$ is the period of $z$. Lemma \ref{folklore} implies that multiplication by $-d$ acts transitively on $\mathcal{P}^{\prime}$ and the common period of all the angles in $\mathcal{P}^{\prime}$ is $r^{\prime}p^{\prime}$. Since $t^-$ and $t^+$ are adjacent angles in $\mathcal{A}_1$, it easily follows that multiplication by $d^{lp}$ acts transitively on $\mathcal{A}_1$. Therefore, all the angles in $\mathcal{A}_1$ land at $z$; i.e. $r^{\prime} \geq r$. Since $r^{\prime}p^{\prime} = rp$, we have $p^{\prime} \leq p$. If $p^{\prime}$ was strictly smaller than $p$, both the sets $\mathcal{A}_1$ and $\mathcal{A}_{1+p^{\prime}} $ would be contained in $\mathcal{A}_1^{\prime}$; hence multiplication by $d^p$ would map these two sets onto themselves preserving their cyclic order. This forces multiplication by $d^p$ to be the identity map on $\mathcal{A}_1$: a contradiction to the transitivity assumption. Thus, $p^{\prime} = p$ and $r^{\prime} = r$. Therefore, $\mathcal{A}_1 = \mathcal{A}_1^{\prime}$ and $\mathcal{P}^{\prime} = \mathcal{P}$.

\begin{flushleft}
\underline{Case 2. $p$ is odd, each $\mathcal{A}_j$ consists of exactly two angles of period $p$}
\end{flushleft}

Let $\mathcal{I}_{\mathcal{P}} = \left(t^-,t^+\right)$ be the characteristic arc of the formal orbit portrait $\mathcal{P} = \lbrace \mathcal{A}_1 , \mathcal{A}_2, \cdots, \mathcal{A}_{p} \rbrace$ such that $\mathcal{A}_1 = \lbrace t^-,t^+ \rbrace$. Choose $c$ outside $\mathcal{M}_d^*$ with $t(c) \in \left(t^-,t^+\right)$. Then, the two dynamical rays $\mathcal{R}_{t^-}^{f_c}$ and $\mathcal{R}_{t^+}^{f_c}$ (of period $k$) land at the same point $z \in J\left(f_c\right)$. Clearly, $z$ is a periodic point of period $p^{\prime} \vert p$. Since $z$ has odd period $p^{\prime}$, the periods of rays landing there can be either $p^{\prime}$ or $2p^{\prime}$. It follows that $ p = p^{\prime}$. Also, by Theorem \ref{complete antiholomorphic}, if two rays of period $p$ land at a periodic point of odd period $p$, then these are the only rays landing there. Therefore, if $\mathcal{P}^{\prime} = \lbrace \mathcal{A}_1^{\prime} , \mathcal{A}_2^{\prime}, \cdots, \mathcal{A}_{p}^{\prime} \rbrace$ is the associated orbit portrait of $\mathcal{O}(z)$ such that $\mathcal{A}_1^{\prime}$ is the set of angles of the rays landing at $z$, then $\mathcal{A}_1 = \mathcal{A}_1^{\prime}$. Hence, $\mathcal{P}^{\prime} = \mathcal{P}$.

\begin{flushleft}
\underline{Case 3. $p$ is odd, each $\mathcal{A}_j$ consists of exactly two angles of period $2p$}
\underline{and one angle of period $p$}
\end{flushleft}

Let $\mathcal{I}_{\mathcal{P}}=\left(t^-,t^+\right)$ be the characteristic arc of the formal orbit portrait $\mathcal{P} = \lbrace \mathcal{A}_1 , \mathcal{A}_2, \cdots, \mathcal{A}_{p} \rbrace$ such that $\lbrace t^-,t^+ \rbrace \subset \mathcal{A}_1$. By Lemma \ref{mixed}, one characteristic angle has period $p$ and the other has period $2p$. Choose $c$ outside the $\mathcal{M}_d^*$ with $t(c) \in \left(t^-,t^+\right)$. Then, the two dynamical rays $\mathcal{R}_{t^-}^{f_c}$ and $\mathcal{R}_{t^+}^{f_c}$ land at the same point $z \in J\left(f_c\right)$. Clearly, $z$ is a periodic point of period $p^{\prime} \vert p$. Since $z$ has odd period $p^{\prime}$, the periods of rays landing there can be either $p^{\prime}$ or $2p^{\prime}$. It follows that $ p = p^{\prime}$. Let $\mathcal{P}^{\prime} = \lbrace \mathcal{A}_1^{\prime}, \mathcal{A}_2^{\prime}, \cdots, \mathcal{A}_{p}^{\prime} \rbrace$ be the associated orbit portrait of $\mathcal{O}(z)$ such that $\mathcal{A}_1^{\prime}$ is the set of angles of the rays landing at $z $. By Theorem \ref{complete antiholomorphic}, if one ray of period $p$ and another ray of period $2p$ land at a periodic point of odd period $p$, then exactly three rays land there. Clearly, $\mathcal{A}_1 = \mathcal{A}_1^{\prime}$. Hence, $\mathcal{P}^{\prime} = \mathcal{P}$.

\begin{flushleft}
\underline{Case 4. $p$ is odd, each $\mathcal{A}_j$ consists of exactly two angles of period $2p$}
\end{flushleft}

As above, let $\mathcal{I}_{\mathcal{P}} = \left(t^-,t^+\right)$ be the characteristic arc of the formal orbit portrait $\mathcal{P} = \lbrace \mathcal{A}_1 , \mathcal{A}_2, \cdots, \mathcal{A}_{p} \rbrace$ such that $\mathcal{A}_1 = \lbrace t^-,t^+ \rbrace$. Choose $c$ outside $\mathcal{M}_d^*$ with $t(c) \in \left(t^-,t^+\right)$. Then, the two dynamical rays $\mathcal{R}_{t^-}^{f_c}$ and $\mathcal{R}_{t^+}^{f_c}$ (of period $2p$) land at the same point $z \in J\left(f_c\right)$. Clearly, $z$ is a periodic point of period some $p^{\prime}$. As $\mathcal{R}_{t^+}^{f_c}$ lands at $z$, $\mathcal{R}_{t^-}^{f_c} = f^{\circ p}\left( \mathcal{R}_{t^+}^{f_c}\right)$ must land at $f^{\circ p}\left(z\right)$. So, $f^{\circ p}\left(z\right) = z$. Hence, $p^{\prime} \vert p.$ Since $z$ has odd period $p^{\prime}$, the periods of rays landing there can be either $p^{\prime}$ or $2p^{\prime}$. It follows that $ p = p^{\prime}$. Let $\mathcal{P}^{\prime} = \lbrace \mathcal{A}_1^{\prime}, \mathcal{A}_2^{\prime}, \cdots, \mathcal{A}_{p}^{\prime} \rbrace$ be the associated orbit portrait of $\mathcal{O}(z)$ with $\mathcal{A}_1^{\prime}$ being the set of angles of the rays landing at $z$. By Theorem \ref{complete antiholomorphic}, if two rays of period $2p$ land at a periodic point of odd period $p$, then either these are the only rays landing there or there can be at most one further ray of period $p$ landing there. In the first case, $\mathcal{A}_1 = \mathcal{A}_1^{\prime}$ and thus, $\mathcal{P}^{\prime} = \mathcal{P}$.

In the second case, we must work a little harder. Let $\mathcal{I}_{\mathcal{P}^{\prime}}$ be the characteristic arc of the orbit portrait $\mathcal{P}^{\prime}$ associated with the orbit $\mathcal{O}(z)$. Since exactly three rays (two of period $2p$ and one of period $p$) land at $z$, Lemma \ref{mixed} tells that one characteristic angle has period $p$ and the other has period $2p$. We claim that the characteristic angles of $\mathcal{P}^{\prime}$ belong to $\mathcal{A}_1^{\prime}$. Let's assume that this is false and we'll establish a contradiction. Note that $t(c) \in \mathcal{I}_{\mathcal{P}^{\prime}}$ ($\mathcal{I}_{\mathcal{P}^{\prime}}$ is the characteristic arc of an actual orbit portrait for $f_c$, hence it must be a critical value arc and thus contains the external angle of the critical value $c$). So $\mathcal{I}_{\mathcal{P}^{\prime}}$ and $\mathcal{I}_{\mathcal{P}}$ intersect and hence must be strictly nested (unlinking property); in particular, $\mathcal{I}_{\mathcal{P}}$ strictly contains $\mathcal{I}_{\mathcal{P}^{\prime}}$ (since $\mathcal{I}_{\mathcal{P}^{\prime}}$ is the characteristic arc of $\mathcal{P}^{\prime}$). But one end of $\mathcal{I}_{\mathcal{P}^{\prime}}$ is an angle of period $2p$ which is already contained in $\mathcal{P}$, thus the characteristic arc $\mathcal{I}_{\mathcal{P}}$ of the formal orbit portrait $\mathcal{P}$ contains an element of $\mathcal{P}$: a contradiction. Hence, the characteristic angles of $\mathcal{P}^{\prime}$ belong to $\mathcal{A}_1^{\prime}$.

\begin{figure}[!ht]
\begin{center}
\includegraphics[scale=0.5]{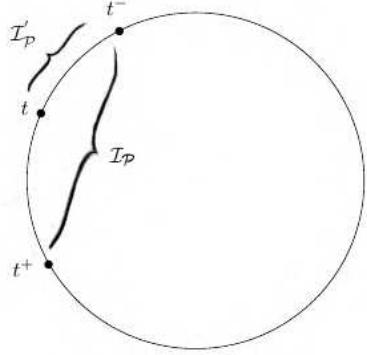}
\end{center}
\caption{The characteristic arc $\mathcal{I}_{\mathcal{P}^{\prime}}$ of the actual orbit portrait $\mathcal{P}^{\prime}$ is contained in the characteristic arc $\mathcal{I}_{\mathcal{P}}$ of the formal orbit portrait $\mathcal{P}$ and they share an endpoint, say $t^-$, of period $2p.$ }
\label{circle}
\end{figure}

Let, $\mathcal{A}_1^{\prime} = \lbrace t^-, t, t^+ \rbrace,$ where the characteristic angles are either $ \lbrace t^-, t \rbrace$ or  $\lbrace t , t^+ \rbrace$. Since $t(c) \in \mathcal{I}_{\mathcal{P}^{\prime}} \bigcap \mathcal{I}_{\mathcal{P}},$ we conclude that $t \in \left( t^-, t^+ \right)$. To fix our ideas, let's assume that $\mathcal{I}_{\mathcal{P}^{\prime}} =  \left( t^-, t \right)$. Since ${\mathcal{P}}^{\prime}$ is an orbit portrait satisfying the condition of Case (3) and $\left( t^-, t \right)$ is its characteristic arc, it follows from above that for any $c^{\prime} \notin \mathcal{M}_d^*$ with $t(c^{\prime}) \in \left( t^-, t \right)$, $f_{c^{\prime}}$ admits the orbit portrait ${\mathcal{P}}^{\prime}$.

Finally, to find an anti-polynomial admitting the orbit portrait ${\mathcal{P}}$, choose $\tilde{c} \notin \mathcal{M}_d^*$ with $t(\tilde{c}) = t$. Since $t(\tilde{c}) \in \left(t^-,t^+\right),$ the characteristic arc of the formal orbit portrait $\mathcal{P}$, it's now routine to check that the two dynamical rays 
$\mathcal{R}_{t^-}^{f_{\tilde{c}}}$ and $\mathcal{R}_{t^+}^{f_{\tilde{c}}}$ (of period $2p$) land at the same point $\tilde{z} \in J\left(f_{\tilde{c}}\right)$ such that $\tilde{z}$ has period $p$. Let $\tilde{\mathcal{P}} = \lbrace \tilde{\mathcal{A}_1} , \tilde{\mathcal{A}_2}, \cdots, \tilde{\mathcal{A}_{p}} \rbrace$ be the orbit portrait associated with the periodic orbit $\mathcal{O}(\tilde{z})$ with $t^- , t^+ \in \tilde{\mathcal{A}_1}$. We'll show that $\tilde{\mathcal{A}_1} = \lbrace t^- , t^+ \rbrace$, which will complete the proof.

First note that the dynamical ray $\mathcal{R}_{t}^{f_{\tilde{c}}}$ can't land because the assumption $t(\tilde{c}) = t$ forces the ray to bifurcate. If some other dynamical ray $\mathcal{R}_{\tilde{t}}^{f_{\tilde{c}}}$ ($\tilde{t} \neq t^- , t^+$) landed at $\tilde{z}$, then the angle $\tilde{t}$ must have period $p$ under multiplication by $-d$. Since $\tilde{z}$ is a repelling periodic point for $f_{\tilde{c}}$ and the dynamical rays at angles $\tilde{t}, t^- , t^+$ land at it, $\tilde{z}$ can be real-analytically followed as a repelling periodic point of odd period $p$ and the corresponding dynamical rays at the same angles would continue to land there under small perturbation of $\tilde{c}$. But if we choose $ c^{\prime} \approx \tilde{c}$ so that $t(c^{\prime}) \in  \left( t^-, t \right)$, then in the dynamical plane of $f_{c^{\prime}}$, the four rays at angles $\tilde{t}, t^- , t^+$ and $t$ would land at a common repelling periodic point of odd period: a contradiction to Lemma \ref{three}. We conclude that $\tilde{\mathcal{A}_1} = \lbrace t^- , t^+ \rbrace$ and hence $\tilde{\mathcal{P}} = \mathcal{P}$. 
\end{proof}

\begin{remark}
a) In case (4) of the previous theorem, we proved the realization of the orbit portrait $\mathcal{P}$ for parameters on a certain parameter ray. One can, with a bit more effort, make the following stronger statement: there is an open subset $S$ of $\mathbb{R}/\mathbb{Z}$ such that every parameter outside $\mathcal{M}_d^*$ having external angle in $S$ admits the orbit portrait $\mathcal{P}$. Indeed, if $\mathcal{I}_{\mathcal{P}}=\left(t^-,t^+\right)$ is the characteristic arc of the formal orbit portrait $\mathcal{P}$; then it is not hard to check that there are at most two angles $t_1, t_2 \in \left(t^-,t^+\right)$ of period $p$ (with $t_1<t_2$, say) such that the three  dynamical rays $\mathcal{R}_{t_i}^{f_{c}}$ ($i=1$ or $2$), $\mathcal{R}_{t^-}^{f_{c}}$ and $\mathcal{R}_{t^+}^{f_{c}}$ can possibly land at a common point. Also, this can happen precisely when the external angle $t(c)$ of $c$ lies in the intervals $(t^- , t_1)$ and $(t_2 , t^+)$ respectively. It follows that $S := (t_1 , t_2)$ satisfies the required property.

b) A unicritical anti-polynomial $f_c$ ($c \notin \mathcal{M}_d^*$) can admit an orbit portrait $\displaystyle $ $\mathcal{P} = \lbrace \mathcal{A}_1 , \mathcal{A}_2, \cdots$, $\mathcal{A}_{p} \rbrace$ only if $t(c) \in \left( t^- , t^+ \right)$, where $\left( t^- , t^+ \right)$ is the characteristic arc of $\mathcal{P}$. Indeed, the characteristic arc must be a critical value arc for some $\mathcal{A}_j$ and in the dynamical plane, the corresponding critical value sector bounded by the two rays $\mathcal{R}_{t^-}^{f_c}$ and $\mathcal{R}_{t^+}^{f_c}$ together with their common landing point contains the critical value $c$. Therefore the external angle $t(c)$ of $c$ will lie in the interval $\left( t^- , t^+ \right)$.
\end{remark}

\bibliographystyle{alpha}
\bibliography{antiorbitportraits}

\begin{thebibliography}{CHRC89}

\bibitem[CHRC89]{CHRS}
W.~D. Crowe, R.~Hasson, P.~J. Rippon, and P.~E. D.~Strain Clark.
\newblock On the structure of the {M}andelbar set.
\newblock {\em Nonlinearity}, 2, 1989.

\bibitem[HS14]{HS}
John Hubbard and Dierk Schleicher.
\newblock {M}ulticorns are not path connected.
\newblock {\em Frontiers in Complex Dynamics: In Celebration of John Milnor's
  80th Birthday}, pages 73--102, 2014.

\bibitem[IM14]{IM}
Hiroyuki Inou and Sabyasachi Mukherjee.
\newblock Non-landing parameter rays of the multicorns.
\newblock Manuscript in preparation, 2014.

\bibitem[Kiw97]{Ki}
Jan Kiwi.
\newblock Rational rays and critical portraits of complex polynomials.
\newblock arXiv : math/9710212, 1997.

\bibitem[Mil92]{M3a}
John Milnor.
\newblock Remarks on iterated cubic maps.
\newblock {\em Experimental Mathematics}, 1:1--24, 1992.

\bibitem[Mil00]{M2a}
John Milnor.
\newblock Periodic orbits, external rays and the {M}andelbrot set.
\newblock {\em Ast\'erisque}, 261:277--333, 2000.

\bibitem[MNS14]{MNS}
Sabyasachi Mukherjee, Shizuo Nakane, and Dierk Schleicher.
\newblock On {M}ulticorns and {U}nicorns {II}: bifurcations in spaces of
  antiholomorphic polynomials.
\newblock arXiv:1404.5031, submitted, 2014.

\bibitem[Nak93]{Na1}
Shizuo Nakane.
\newblock Connectedness of the {T}ricorn.
\newblock {\em Ergodic Theory and Dynamical Systems}, 13:349--356, 1993.

\bibitem[NS03]{NS}
Shizuo Nakane and Dierk Schleicher.
\newblock On {M}ulticorns and {U}nicorns {I}: Antiholomorphic dynamics,
  hyperbolic components and real cubic polynomials.
\newblock {\em International Journal of Bifurcation and Chaos}, 13:2825--2844,
  2003.

\bibitem[Sch00]{S1a}
Dierk Schleicher.
\newblock Rational parameter rays of the {M}andelbrot set.
\newblock {\em Ast\'erisque}, 261:405--443, 2000.

\end{thebibliography}

\end{document}